\bmdefine{\sss}{s}
\bmdefine{\vvv}{v}
\DeclareMathAlphabet{\mathscr}{U}{rsfs}{m}{n}
\newcommand{\msPPP}{\mathscr{P}}
\newcommand{\msXXX}{\mathscr{X}}
\newcommand{\msKKK}{\mathscr{K}}
\newcommand{\NNN}{\mathbb{N}}
\newcommand{\ZZZ}{\mathbb{Z}}
\newcommand{\QQQ}{\mathbb{Q}}
\newcommand{\RRR}{\mathbb{R}}
\newcommand{\KKK}{\mathbb{K}}
\newcommand{\mmmm}{\mathfrak{m}}
\newcommand{\pppp}{\mathfrak{p}}
\newcommand{\UUUUU}{{\mathcal U}}
\newcommand{\tUUUUU}{{{t}\mathcal U}}
\newcommand{\qUUUUU}{{{q}\mathcal U}}
\newcommand{\UUUUUn}{{\mathcal U}^{(n)}}
\newcommand{\define}{\mathrel{:=}}
\newcommand{\gor}{Gorenstein}
\newcommand{\cm}{Cohen-Macaulay}
\newcommand{\relint}{{\rm{relint}}}
\newcommand{\trace}{{\mathrm{tr}}}
\newcommand{\Div}{{\mathrm{Div}}}
\renewcommand{\hom}{{\mathrm{Hom}}}
\newcommand{\spec}{{\mathrm{Spec}}}
\newcommand{\aff}{\mathrm{aff}}
\newcommand{\conv}{\mathrm{conv}}
\newcommand{\stab}{\mathrm{STAB}}
\newcommand{\hstab}{\mathrm{HSTAB}}
\newcommand{\tstab}{\mathrm{TSTAB}}
\newcommand{\qstab}{\mathrm{QSTAB}}
\newcommand{\xip}{\xi^{+}}
\newcommand{\ekp}{E_\KKK[\msPPP]}
\newcommand{\eksg}{{E_\KKK[\stab(G)]}}
\newcommand{\ekhsg}{{E_\KKK[\hstab(G)]}}
\newcommand{\ektsg}{{E_\KKK[\tstab(G)]}}
\newcommand{\ekqsg}{{E_\KKK[\qstab(G)]}}
\newtheorem{thm}{Theorem}[section]
\newtheorem{fact}[thm]{Fact}
\newtheorem{lemma}[thm]{Lemma}
\newtheorem{cor}[thm]{Corollary}
\newtheorem{definition}[thm]{Definition}
\newtheorem{prop}[thm]{Proposition}
\newtheorem{remark}[thm]{Remark}
\newcommand{\bigzerou}{\smash{\lower1.7ex\hbox{\bg 0}}}
\newcommand{\bigastu}{\smash{\lower1.7ex\hbox{\bg *}}}
\numberwithin{equation}{section}
\newcommand{\mylabel}[1]{{\label{#1}\tt [#1]}}
\let\mylabel=\label
\title{%
\gor\ on the punctured spectrum and nearly \gor\ property of
the Ehrhart ring of the stable set polytope
of an h-perfect graph%
}
\author{Mitsuhiro Miyazaki%
\footnote{Partially supported by JSPS KAKENHI 20K03556.}
\\
%
Kyoto University of Education\\
\tt e-mail: mmyzk7@gmail.com}
\date{}
\begin{document}

\maketitle

\markright{Gorenstein Ehrhart rings}

\sloppy

\begin{abstract}
In this paper,
we give a criterion of the nearly \gor\  property of
the Ehrhart ring of the stable set polytope of
an h-perfect graph:
the Ehrhart ring of the stable set polytope of an h-perfect graph $G$ 
with connected components $G^{(1)}$, \ldots, $G^{(\ell)}$
is
nearly \gor\ if and only if
(1)
for each $i$, the Ehrhart ring of the stable set polytope of $G^{(i)}$ is \gor\
and 
(2)
$|\omega(G^{(i)})-\omega(G^{(j)})|\leq 1$ for any $i$ and $j$,
where $\omega(G^{(i)})$ is the clique number of $G^{(i)}$.

We also show that the Segre product of \cm\ graded rings with linear non-zerodivisor
which are \gor\ on the punctured spectrum is also \gor\ on the punctured spectrum
if all but one rings are standard graded.
\\
MSC: 13H10, 52B20, 05E40, 05C17\\
 {\bf Keywords}: nearly \gor\ ring, h-perfect graph, Ehrhart ring, stable set polytope,
\gor\ on the punctured spectrum
\end{abstract}

\section{Introduction}

In 1963, Bass \cite{bas} defined the notion of \gor\ rings and showed
ubiquity of \gor\ rings as the title of the paper indicates.
\gor\ property lies between \cm\ property and complete intersection property
and attracts many researchers.
However, during the progress of the research, researchers gradually felt that 
there is a rather long distance between \gor\ property and \cm\ property.
After that, many efforts are made to define a new notion between \gor\ and \cm\
properties in order to fill the gap between the two notions.

Two major successful notions of these are almost \gor\ and nearly
\gor\ properties.
Almost \gor\ property is first defined for 1-dimensional 
analytically unramified local rings by Barucci and Fr\"oberg \cite{bf},
next for general 1-dimensional local rings by Goto, Matsuoka and Phung \cite{gmp}
and finally by Goto, Takahashi and Taniguchi for general rings \cite{gtt}.
The present author gave a characterization of almost \gor\ property of Hibi rings,
the Ehrhart ring of the order polytope of a partially ordered set \cite{ag}.
Nearly \gor\ property is defined by Herzog, Hibi and Stamate \cite{hhs} by using
the trace of the canonical module of a local or graded ring.
They gave a characterization of nearly \gor\ property of Hibi rings
and the present author and Page gave a characterization of nearly \gor\
property of the Ehrhart ring of the chain polytope of a partially ordered set \cite{mp}.

In this paper, we investigate nearly \gor\ property of the Ehrhart ring of the 
stable set polytope of an h-perfect graph and characterize nearly \gor\ property completely.
Since perfect or t-perfect graphs are h-perfect, this characterization is also valid
for perfect or t-perfect graphs.
Hibi and Stamate asserted that they characterized when the Ehrhart ring of the stable
set polytope of a perfect graph is nearly \gor\ \cite[Theorem 13]{hs}.
However, their proof has a serious gap.
It seems like that they used an assertion which has a counter example.
See Remark \ref{rem:hs gap}.
Therefore, we do not use their results and prove the above mentioned results
independently.

In the way of studying nearly \gor\ property of the Ehrhart ring of an h-perfect
graph, we study the property ``\gor\ on the punctured spectrum''.
Since we are mainly interested in quotient rings of polynomial rings
which are quasi-unmixed, generalized \cm\ property is equivalent to
\cm\ on the punctured spectrum.
Since generalized \cm\ is an 
important notion, \gor\ on the punctured spectrum must be an important notion.
Therefore, we study the \gor\ on the punctured spectrum property for general graded rings.
See Theorem \ref{thm:gor pspec general}.

This paper is organized as follows.
In Section \ref{sec:pre}, we fix notation and recall known facts.
Next in Section \ref{sec:pspec}, we study the property \gor\ on the punctured spectrum
for Segre product of graded rings.
Almost all known results concerning this assumed that the rings involved are
standard graded.
We show that in almost all of them, one can replace standard graded property by the
existence of a linear non-zerodivisor.
Finally in Section \ref{sec:h-perfect}, we characterize the nearly \gor\ property
of the Ehrhart ring of an h-perfect graph by the status of connected components of
the graph.
Since perfect or t-perfect graphs are h-perfect, this characterization is also applicable
to perfect or t-perfect graphs.

By combining our previous results (Fact \ref{fact:gor cri}), we have characterized
nearly \gor\ property of the Ehrhart rings of h-perfect, perfect and t-perfect
graphs in the words of graph theory.

\section{Preliminaries}

\mylabel{sec:pre}

In this section, we establish notation and terminology.
In this paper, all rings and algebras are assumed to 
be commutative 
with an identity element.
Further, all graphs are finite simple graphs without loop.
We denote the set of nonnegative integers, 
the set of integers, 
the set of rational numbers and 
the set of real numbers
by $\NNN$, $\ZZZ$, $\QQQ$ and $\RRR$ respectively.

Let $\KKK$ be a field.
We call a finitely generated non-negatively graded $\KKK$-algebra
$\bigoplus_{n=0}^\infty R_n$ with $R_0=\KKK$ an $\NNN$-graded $\KKK$-algebra.
If an $\NNN$-graded $\KKK$-algebra is generated by degree 1 elements,
then we say that it is a standard graded $\KKK$-algebra.
For an $\NNN$-graded $\KKK$-algebra $R=\bigoplus_{n=0}^\infty R_n$,
we set $\mmmm_R\define\bigoplus_{n=1}^\infty R_n$ and for a
$\ZZZ$-graded $R$-module $M=\bigoplus_{n\in \ZZZ} M_n$ and an integer $k$,
we denote by $M_{\geq k}$ the $R$-submodule $\bigoplus_{n=k}^\infty M_n$ of $M$.
We denote by $a(R)$ the $a$-invariant of a \cm\ $\NNN$-graded $\KKK$-algebra $R$,
defined by Goto and Watanabe \cite{gw}.

For $\NNN$-graded $\KKK$-algebras $R^{(1)}$, \ldots, $R^{(t)}$, we define the
Segre product $R^{(1)}\#\cdots\#R^{(t)}$ 
of $R^{(1)}$, \ldots, $R^{(t)}$ by
$R^{(1)}\#\cdots\#R^{(t)}
=\bigoplus_{n=0}^\infty R^{(1)}_n\otimes_\KKK\cdots\otimes_\KKK R^{(t)}_n$.
Further, for $\ZZZ$-graded $R^{(i)}$-modules $M^{(i)}$ for $1\leq i\leq t$,
we define the Segre product $M^{(1)}\#\cdots\#M^{(t)}$ by
$M^{(1)}\#\cdots\#M^{(t)}
=\bigoplus_{n\in \ZZZ}M^{(1)}_n\otimes_\KKK\cdots\otimes_\KKK M^{(t)}_n$.
For elements $x_i\in M^{(i)}_n$ for $1\leq i\leq t$, we sometimes denote
the element 
$x_1\otimes\cdots\otimes x_t\in M^{(1)}_n\otimes\cdots\otimes M^{(t)}_n$
by $x_1\#\cdots \#x_t$
in order to emphasize that it is an element of the Segre product.

For a set $X$, we denote by $\#X$ the cardinality of $X$.
For sets $X$ and $Y$, we define $X\setminus Y\define\{x\in X\mid x\not\in Y\}$.
For nonempty sets $X$ and $Y$, we denote the set of maps from $X$ to $Y$ by $Y^X$.
If $X$ is a finite set, we identify $\RRR^X$ with the Euclidean space 
$\RRR^{\#X}$.
For $f$, $f_1$, $f_2\in\RRR^X$ and $a\in \RRR$,
we define
maps $f_1\pm f_2$ and $af$ 
by
$(f_1\pm f_2)(x)=f_1(x)\pm f_2(x)$ and
$(af)(x)=a(f(x))$
for $x\in X$.
Let $A$ be a subset of $X$.
We define the characteristic function $\chi_A\in\RRR^X$ by
$\chi_A(x)=1$ for $x\in A$ and $\chi_A(x)=0$ for $x\in X\setminus A$.
For a nonempty subset $\msXXX$ of $\RRR^X$, we denote by $\conv\msXXX$
(resp.\ $\aff\msXXX$)
the convex hull (resp.\ affine span) of $\msXXX$.

\begin{definition}
\mylabel{def:xi+}
\rm
Let $X$ be a finite set and $\xi\in\RRR^X$.
For $B\subset X$, we set $\xip(B)\define\sum_{b\in B}\xi(b)$.
We define the empty sum to be 0, i.e., if $B=\emptyset$, then $\xi^+(B)=0$.
\end{definition}

For unexplained terminology of graph theory, we consult \cite{die}.
A stable set of a graph $G=(V,E)$ is a subset $S$ of $V$ with no two
elements of $S$ are adjacent.
We treat the empty set as a stable set.

\begin{definition}
\rm
The stable set polytope $\stab(G)$ of a graph $G=(V,E)$ is
$$
\conv\{\chi_S\in\RRR^V\mid \mbox{$S$ is a stable set of $G$.}\}
$$
\end{definition}
It is clear that for $f\in\stab(G)$,

\begin{enumerate}
\item
\mylabel{item:nonneg}
$0\leq f(x)\leq 1$ for any $x\in V$.
\item
\mylabel{item:clique}
$f^+(K)\leq 1$ for any clique $K$ in $G$.
\item
\mylabel{item:cycle}
$f^+(C)\leq\frac{\#C-1}{2}$ for any odd cycle $C$.
\end{enumerate}

\begin{definition}
\rm
We set
\begin{eqnarray*}
\hstab(G)&\define&\{f\in \RRR^V\mid\mbox{$f$ satisfies \ref{item:nonneg}, 
\ref{item:clique} and \ref{item:cycle} above}\},\\
\qstab(G)&\define&\{f\in \RRR^V\mid\mbox{$f$ satisfies \ref{item:nonneg} 
and \ref{item:clique} above}\}\qquad\mbox{and}\\
\tstab(G)&\define&\left\{f\in \RRR^V\left|\
\vcenter{\hsize=.5\textwidth\relax
\noindent$f$ satisfies \ref{item:nonneg}
and \ref{item:cycle} above and $f^+(e)\leq 1$ for any $e\in E$}\right.\right\}.
\end{eqnarray*}
\end{definition}
Note that $\hstab(G)$, $\tstab(G)$ and $\qstab(G)$ are convex polytopes 
since it is bounded by \ref{item:nonneg}.
It is immediately seen that $\stab(G)\subset\hstab(G)=\qstab(G)\cap\tstab(G)$.
If $\stab(G)=\hstab(G)$ (resp.\ $\stab(G)=\tstab(G)$),
then $G$ is called an h-perfect (resp.\ t-perfect) graph.
Further, by \cite[Theorem 3.1]{chv}, $G$ is perfect if and only if
$\stab(G)=\qstab(G)$.
Thus, perfect graphs and t-perfect graphs are h-perfect.

We fix notation about Ehrhart rings.
Let $\KKK$ be a field, $X$ a finite set  
and $\msPPP$ a rational convex polytope in $\RRR^X$, i.e., 
a convex polytope whose vertices are contained in $\QQQ^X$.
Let $-\infty$ be a new element
with $-\infty\not\in X$ and
set $X^-\define X\cup\{-\infty\}$.
Also let $\{T_x\}_{x\in X^-}$ be
a family of indeterminates indexed by $X^-$.
For $f\in\ZZZ^{X^-}$, 
we denote the Laurent monomial 
$\prod_{x\in X^-}T_x^{f(x)}$ by $T^f$.
We set $\deg T_x=0$ for $x\in X$ and $\deg T_{-\infty}=1$.
Then the Ehrhart ring of $\msPPP$ over a field $\KKK$ is the $\NNN$-graded subring
$$
\KKK[T^f\mid f\in \ZZZ^{X^-}, f(-\infty)>0, \frac{1}{f(-\infty)}f|_X\in\msPPP]
$$
of the Laurent polynomial ring $\KKK[T_x^{\pm1}\mid x\in X^-]$,
where $f|_X$ is the restriction of $f$ to $X$.
We denote the Ehrhart ring of $\msPPP$ over $\KKK$ by $\ekp$.

It is known that $\ekp$ is Noetherian.
Therefore normal and \cm\ by the result of Hochster \cite{hoc}.
Further, 
by the description of the canonical module of a normal affine semigroup ring
by Stanley \cite[p.\ 82]{sta2}, we see 
the following.

\begin{lemma}
\mylabel{lem:sta desc}
The ideal 
$$\bigoplus_{f\in\ZZZ^{X^-}, f(-\infty)>0, \frac{1}{f(-\infty)}f|_X\in\relint\msPPP}
\KKK T^f
$$
of $\ekp$ is the canonical module of $\ekp$,
where $\relint\msPPP$ denotes the interior of $\msPPP$ in the topological space
$\aff\msPPP$.
\end{lemma}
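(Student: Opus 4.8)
The plan is to realize $\ekp$ as a normal affine semigroup ring and then read off its canonical module from Stanley's description \cite[p.~82]{sta2}.

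First I would introduce the height-$1$ copy of $\msPPP$,
$$
B\define\{g\in\RRR^{X^-}\mid g(-\infty)=1\text{ and }g|_X\in\msPPP\},
$$
and the cone $\sigma\define\RRR_{\geq0}B=\{tg\mid t\geq0,\ g\in B\}$ over it, with apex the origin. Since $\msPPP$ is a (bounded) rational polytope, $\sigma$ is a rational polyhedral cone whose only lattice point at height $0$ is the origin. By construction a nonzero $f\in\ZZZ^{X^-}$ lies in $\sigma$ exactly when $f(-\infty)>0$ and $\frac{1}{f(-\infty)}f|_X\in\msPPP$; thus the exponent vectors appearing in the definition of $\ekp$ are precisely the nonzero elements of $S\define\sigma\cap\ZZZ^{X^-}$, and $\ekp=\KKK[S]$. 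As recalled before the statement, $\ekp$ is normal, so $S$ is a normal affine semigroup with $\RRR_{\geq0}S=\sigma$.

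I would then invoke Stanley's formula for the canonical module of a normal affine semigroup ring \cite[p.~82]{sta2}: $\omega_{\ekp}$ is the $\KKK$-subspace of $\ekp$ spanned by the monomials $T^f$ whose exponent vector lies in the relative interior of the cone, i.e.\ $f\in\relint\sigma\cap\ZZZ^{X^-}$. This reduces the lemma to identifying $\relint\sigma$ explicitly. For this I would use that $\sigma$ is the cone over the compact convex set $B$, whose apex $0$ lies off the affine hyperplane $\{g(-\infty)=1\}$ containing $B$. Hence the linear span of $\sigma$ has dimension $\dim\msPPP+1$, and $\relint\sigma=\{tg\mid t>0,\ g\in\relint_{\aff B}B\}$, where $\relint_{\aff B}B$ is the relative interior of $B$ in its affine span. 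Under the bijection $g\mapsto g|_X$ between $B$ and $\msPPP$, this relative interior corresponds to $\relint\msPPP$. Therefore $f\in\relint\sigma\cap\ZZZ^{X^-}$ if and only if $f(-\infty)>0$ and $\frac{1}{f(-\infty)}f|_X\in\relint\msPPP$, which is exactly the index set in the statement, completing the identification of the canonical module.

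The only genuinely delicate point is this last step: one must take the relative interior of $\msPPP$ inside $\aff\msPPP$ rather than the topological interior in $\RRR^X$, since $\msPPP$ need not be full-dimensional. Keeping track of $\aff\msPPP$ (equivalently of the linear span of $\sigma$) throughout is precisely what makes the relative-interior condition, and hence Stanley's formula, come out in the stated form.
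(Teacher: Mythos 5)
Your proposal is correct and follows essentially the same route as the paper: the paper gives no separate argument for this lemma, deducing it directly from Hochster's normality theorem and Stanley's description of the canonical module of a normal affine semigroup ring, which is precisely what you do. Your write-up merely makes explicit the details the paper leaves to the citation, namely realizing $\ekp$ as the semigroup ring of the cone over $\msPPP$ placed at height one and identifying the relative interior of that cone with the index set in the statement.
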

We denote the ideal of the above lemma by $\omega_{\ekp}$ and call the canonical ideal of $\ekp$.

Let $X_1$, \ldots, $X_t$ be pairwise disjoint finite sets and let $\msPPP_i$ be a rational
convex polytope in $\RRR^{X_i}$ for each $i$.
Then it is easily verified that $\msPPP_1\times\cdots\times\msPPP_t$ is a rational
convex polytope in $\RRR^{X_1\cup\cdots\cup X_t}$ and
$E_\KKK[\msPPP_1\times\cdots\times \msPPP_t]=E_\KKK[\msPPP_1]\#\cdots\# E_\KKK[\msPPP_t]$.


Let $R$ be a Noetherian normal domain.
Then the set of divisorial ideals $\Div(R)$ form a group by the operation
$I\cdot J\define R:_{Q(R)}(R:_{Q(R)} IJ)$ for $I$, $J\in\Div(R)$,
where $Q(R)$ is the quotient field of $R$.
See e.g., \cite[Chapter I]{fos} for details.
We denote the $n$-th power of $I\in\Div(R)$ in this group by $I^{(n)}$
and call the $n$-th symbolic power of $I$ for any integer $n$.
Note that if $R$ is a \cm\ local or graded ring over a field with canonical
module $\omega$, then $\omega$ is isomorphic to 
a divisorial ideal.
In particular, $\omega_{\ekp}\in\Div(\ekp)$ for a rational convex polytope $\msPPP$.
See e.g., \cite[Chapter 3]{bh} for details.

Next we recall the results of \cite{hhs}.
First we recall the following.

\begin{definition}
\rm
Let $R$ be a ring and $M$ an $R$-module.
We set 
$$
\trace(M)\define\sum_{\varphi\in\hom(M,R)}\varphi(M)
$$
and call $\trace(M)$ the trace of $M$.
\end{definition}
%
%
%
We recall the following.

\begin{fact}[{\cite[Lemma 1.1]{hhs}}]
\mylabel{fact:hhs1.1}
Let $R$ be a ring and $I$ an ideal of $R$ containing an $R$-regular element.
Also let $Q(R)$ be the total quotient ring of fractions of $R$
and set $I^{-1}\define\{x\in Q(R)\mid xI\subset R\}$.
Then
$$
\trace(I)=I^{-1}I.
$$
\end{fact}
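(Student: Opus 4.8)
The plan is to show that every $R$-homomorphism $\varphi\colon I\to R$ is nothing but multiplication by a suitable element of $Q(R)$, and that this element necessarily lies in $I^{-1}$; then the trace, being the sum of the images $\varphi(I)$, collapses to the product $I^{-1}I$.

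First I would fix an $R$-regular element $b\in I$. Since $b$ is a non-zerodivisor of $R$, its image $b/1$ is invertible in the total quotient ring $Q(R)$, so for any $\varphi\in\hom(I,R)$ the element $x\define\varphi(b)/b\in Q(R)$ is well defined. The key computation is that for every $c\in I$, $R$-linearity gives $b\varphi(c)=\varphi(bc)=\varphi(cb)=c\varphi(b)$, whence $\varphi(c)=c\varphi(b)/b=xc$ in $Q(R)$. Since $\varphi(c)\in R$ for all $c\in I$, this shows $xI\subset R$, i.e.\ $x\in I^{-1}$, and moreover $\varphi$ is exactly the multiplication map $c\mapsto xc$. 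Thus $\varphi(I)=xI$ with $x\in I^{-1}$.

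Conversely, I would observe that each $x\in I^{-1}$ defines an element $\mu_x\in\hom(I,R)$ by $\mu_x(c)\define xc$, which indeed lands in $R$ precisely because $x\in I^{-1}$, and that $\mu_x(I)=xI$. Hence the family of images $\{\varphi(I)\mid\varphi\in\hom(I,R)\}$ coincides exactly with the family $\{xI\mid x\in I^{-1}\}$. Summing over these,
$$
\trace(I)=\sum_{\varphi\in\hom(I,R)}\varphi(I)=\sum_{x\in I^{-1}}xI=I^{-1}I,
$$
the last equality holding because the sum of the $R$-submodules $xI$ over all $x\in I^{-1}$ is by definition the product module $I^{-1}I$.

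The only delicate point is the second paragraph: one must use that $b$ is regular, so that $b/1$ is invertible in $Q(R)$, both to make sense of the quotient $\varphi(b)/b$ and to force an abstract homomorphism to be a single multiplication. Once this ``division by a regular element'' is available, the identification of $\hom(I,R)$ with $I^{-1}$ and the resulting computation of the trace are purely formal.
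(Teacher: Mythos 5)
Your proof is correct, and it follows essentially the same route as the source of this statement: the paper itself states Fact \ref{fact:hhs1.1} without proof, citing \cite[Lemma 1.1]{hhs}, whose content is exactly the correspondence you establish between $\hom(I,R)$ and $I^{-1}$, where $\varphi$ corresponds to $\varphi(b)/b$ via the relation $b\varphi(c)=\varphi(bc)=c\varphi(b)$. Your handling of the one delicate point — that regularity of $b$ makes $b$ invertible in the total quotient ring, forcing every homomorphism to be multiplication by an element of $I^{-1}$ — is precisely what makes the identification, and hence the equality $\trace(I)=I^{-1}I$, go through.
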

Note that if $R$ is a Noetherian normal domain and 
$I$ is a divisorial ideal, then $I^{-1}=I^{(-1)}$.
Moreover, we recall the following.

\begin{fact}[{\cite[Lemma 2.1]{hhs}}]
\mylabel{fact:hhs2.1}
Let $R$ be a \cm\ local or graded ring over a field with canonical
module $\omega_R$.
Then for $\pppp\in\spec(R)$,
$$
R_\pppp\mbox{ is \gor}\iff
\pppp\not\supset\trace(\omega_R).
$$
\end{fact}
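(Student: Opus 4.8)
The plan is to reduce the statement to the classical characterization that a \cm\ local ring with canonical module is \gor\ precisely when its canonical module is free of rank one, and then to translate that freeness into a condition on the trace ideal. Throughout I would write $\pppp$ for the prime at which we localize and set $S\define R_\pppp$.

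First I would record the two compatibility facts that let the localization go through. Since $R$ is \cm\ with canonical module $\omega_R$, the localization $(\omega_R)_\pppp$ is a canonical module of $S=R_\pppp$ (see \cite{bh}). Next, because $\omega_R$ is finitely presented over the Noetherian ring $R$ and localization is flat, the natural map $\hom(\omega_R,R)_\pppp\to\hom_S((\omega_R)_\pppp,S)$ is an isomorphism; as the trace is the image of the evaluation map $\omega_R\otimes_R\hom(\omega_R,R)\to R$, it commutes with localization, giving $\trace(\omega_R)_\pppp=\trace_S((\omega_R)_\pppp)$. Writing $J\define\trace(\omega_R)$, one then has
$$
\pppp\not\supset J\iff J_\pppp=S\iff \trace_S((\omega_R)_\pppp)=S,
$$
so it suffices to prove the purely local statement: for a \cm\ local ring $S$ with canonical module $\omega_S$, one has $\trace_S(\omega_S)=S$ if and only if $S$ is \gor.

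For the local statement, the direction ``\gor\ implies trace equals $S$'' is immediate: if $S$ is \gor\ then $\omega_S\cong S$, and $\trace_S(S)=S$ since the identity map already has image $S$. For the converse, suppose $\trace_S(\omega_S)=S$. This ideal is the sum of the images $\varphi(\omega_S)$ over $\varphi\in\hom_S(\omega_S,S)$, so, $S$ being local with maximal ideal $\mmmm_S$, not all these images lie in $\mmmm_S$; hence some $\varphi$ has image an ideal not contained in $\mmmm_S$, i.e.\ $\varphi$ is surjective. Since $S$ is free, this surjection splits, giving $\omega_S\cong S\oplus\ker\varphi$.

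The crux is to rule out a nontrivial complementary summand. Here I would invoke the key structural property of the canonical module, namely that the homothety map $S\to\hom_S(\omega_S,\omega_S)$ is an isomorphism. Thus $\hom_S(\omega_S,\omega_S)=S$ is local and has no nontrivial idempotents, which forces $\omega_S$ to be indecomposable; the splitting $\omega_S\cong S\oplus\ker\varphi$ then compels $\ker\varphi=0$, so $\omega_S\cong S$ and $S$ is \gor. I expect this indecomposability step, together with verifying that the trace genuinely commutes with localization, to be the main technical points. The graded and local cases are subsumed uniformly, since the entire argument proceeds after localizing $R$ at $\pppp$ and uses only that $R_\pppp$ is \cm\ local with canonical module $(\omega_R)_\pppp$.
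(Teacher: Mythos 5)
The paper offers no proof of this statement to compare against: it is imported verbatim as a Fact from \cite[Lemma 2.1]{hhs}, so the only benchmark is the argument in that cited source. Your proof is correct, and it is essentially that standard argument. The three ingredients you isolate are exactly the right ones: (i) $(\omega_R)_\pppp$ is a canonical module of $R_\pppp$; (ii) the trace commutes with localization, since $\omega_R$ is finitely presented over the Noetherian ring $R$, so $\hom_R(\omega_R,R)_\pppp\cong\hom_{R_\pppp}((\omega_R)_\pppp,R_\pppp)$, and the trace is the image of the evaluation map, which localization (being exact) preserves; and (iii) the local statement, where your splitting $\omega_S\cong S\oplus\ker\varphi$ combined with the homothety isomorphism $S\cong\hom_S(\omega_S,\omega_S)$ (so $\omega_S$ is indecomposable, as a local ring has no nontrivial idempotents) forces $\omega_S\cong S$. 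All of these are standard facts available in \cite{bh}, and your reduction handles the local and graded cases uniformly, as claimed. I see no gap.
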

On account of this fact, Herzog, Hibi and Stamate \cite[Definition 2.2]{hhs}
defined the nearly \gor\ property.

\begin{definition}
\rm
Let $R$ be a \cm\ local or graded ring over a field with canonical
module $\omega_R$.
If $\trace(\omega_R)\supset \mmmm_R$, then $R$ is called a nearly \gor\ ring.
\end{definition}

Now we recall our previous results \cite{ghp}.

\begin{definition}
\rm
\mylabel{def:un}
Set 
$\msKKK=\msKKK(G)\define
\{K\subset V\mid
K$ is a clique of $G$ and size of $K$ is less than or equal to 3$\}$.
For $n\in\ZZZ$.
We set
$$
\UUUUUn=
\UUUUUn(G)
\define\left\{\mu\in\ZZZ^{V^-}\left|\ 
\vcenter{\hsize=.5\textwidth\relax\noindent
$\mu(z)\geq n$ for any $z\in V$,
$\mu^+(K)\leq \mu(-\infty)-n$ for any maximal clique $K$ of $G$ and
$\mu^+(C)\leq\mu(-\infty)\frac{\#C-1}{2}-n$ for any odd cycle $C$ without
chord and length at least $5$}\right.\right\},
$$
$$
\tUUUUU^{(n)}=
\tUUUUU^{(n)}(G)
\define\left\{\mu\in\ZZZ^{V^-}\left|\ 
\vcenter{\hsize=.5\textwidth\relax\noindent
$\mu(z)\geq n$ for any $z\in V$,
$\mu^+(K)\leq \mu(-\infty)-n$ for any maximal element $K$ of $\msKKK$ and
$\mu^+(C)\leq\mu(-\infty)\frac{\#C-1}{2}-n$ for any odd cycle $C$ without
chord and length at least $5$}\right.\right\}
$$
and
$$
\qUUUUU^{(n)}=
\qUUUUU^{(n)}(G)
\define\left\{\mu\in\ZZZ^{V^-}\left|\
\vcenter{\hsize=.5\textwidth\relax\noindent
$\mu(z)\geq n$ for any $z\in V$ and
$\mu^+(K)\leq \mu(-\infty)-n$ for any maximal clique of $G$}\right.\right\}.
$$
\end{definition}
By this notation, the following holds.

\begin{fact}[{\cite[Proposition 3.7 and Remark 3.10]{ghp}}]
\mylabel{fact:symb power}
$$\omega_\ekhsg^{(n)}=\bigoplus_{\mu\in\UUUUU^{(n)}}\KKK T^\mu,
$$
$$
\omega_{\ektsg}^{(n)}=\bigoplus_{\mu\in\tUUUUU^{(n)}}\KKK T^\mu
$$
and
$$
\omega_{\ekqsg}^{(n)}=\bigoplus_{\mu\in\qUUUUU^{(n)}}\KKK T^\mu
$$
for any $n\in\ZZZ$.
\end{fact}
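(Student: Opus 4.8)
The plan is to handle the three rings uniformly through the theory of divisorial ideals of normal affine semigroup rings, isolating all graph-theoretic input into a single step: the determination of the facets of the three polytopes. Write $R\define\ekp$ for $\msPPP\in\{\hstab(G),\tstab(G),\qstab(G)\}$. Since $\msPPP$ is a bounded rational polytope, $R$ is the semigroup ring, over the lattice $\ZZZ^{V^-}$, of the lattice points of the cone $C(\msPPP)\define$ the closure of $\{f\in\RRR^{V^-}\mid f(-\infty)>0,\ \frac{1}{f(-\infty)}f|_V\in\msPPP\}$; by Hochster's theorem (already invoked in the excerpt) $R$ is a normal affine semigroup ring. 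Let $\sigma_1,\dots,\sigma_s$ be the primitive integral support functionals of the facets of $C(\msPPP)$, so that $C(\msPPP)=\{f\mid\sigma_i(f)\geq 0\ (1\leq i\leq s)\}$. The standard facet description of $\Div(R)$ (see \cite[Chapter~I]{fos}, \cite[Chapter~3]{bh}, and \cite[p.~82]{sta2}) says that every monomial divisorial ideal has the form $D(\mathbf a)\define\bigoplus_{f\in\ZZZ^{V^-},\ \sigma_i(f)\geq a_i\ \forall i}\KKK T^f$ for some $\mathbf a=(a_1,\dots,a_s)\in\ZZZ^s$, that $R=D(\mathbf 0)$, and that the divisorial product restricts to addition of indices, $D(\mathbf a)\cdot D(\mathbf b)=D(\mathbf a+\mathbf b)$.

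Next I would pin down the canonical ideal and its symbolic powers in these terms. Because $C(\msPPP)$ is full-dimensional, its relative interior is cut out by making every facet inequality strict; for an integral $f$ with $f(-\infty)>0$ this means $\sigma_i(f)>0$, and since each $\sigma_i$ is \emph{primitive} (hence surjective onto $\ZZZ$) this is equivalent to $\sigma_i(f)\geq 1$ for all $i$. Thus Lemma \ref{lem:sta desc} gives $\omega_\ekp=D(\mathbf 1)$ with $\mathbf 1=(1,\dots,1)$, and the group law yields
$$
\omega_\ekp^{(n)}=D(\mathbf 1)^{(n)}=D(n\mathbf 1)=\bigoplus_{\substack{f\in\ZZZ^{V^-}\\ \sigma_i(f)\geq n\ (1\leq i\leq s)}}\KKK T^f
$$
for every $n\in\ZZZ$. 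It therefore remains only to read off the $\sigma_i$ for each polytope and to check that the constraints $\sigma_i(f)\geq n$ are exactly those defining $\UUUUUn$, $\tUUUUU^{(n)}$ and $\qUUUUU^{(n)}$.

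The genuinely combinatorial step is the facet determination, where the facets of $C(\msPPP)$ correspond bijectively to the facets of $\msPPP$. For $\qstab(G)$ the facets are the nonnegativity constraints $f(z)\geq 0$ and the clique inequalities $f^+(K)\leq f(-\infty)$ for the \emph{maximal} cliques $K$ (the non-maximal cliques are redundant, and the bound $f(z)\leq f(-\infty)$ is either a maximal-clique inequality, when $z$ is isolated, or follows from a clique inequality together with $f\geq 0$). For $\tstab(G)$ one keeps nonnegativity, the clique-type inequalities for the maximal elements of $\msKKK$, and the odd-cycle inequalities for chordless odd cycles of length at least $5$; the reorganization via $\msKKK$ reflects that an edge lying in a triangle gives a redundant inequality (it follows from the triangle inequality and $f\geq 0$), so the surviving clique-type facets are exactly the maximal members of $\msKKK$. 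For $\hstab(G)=\qstab(G)\cap\tstab(G)$ one combines the two lists, keeping maximal-clique inequalities and chordless odd-cycle inequalities of length at least $5$. In each case the primitive support functionals are $\sigma_z(f)=f(z)$, $\sigma_K(f)=f(-\infty)-f^+(K)$ and $\sigma_C(f)=\frac{\#C-1}{2}f(-\infty)-f^+(C)$, and primitivity is immediate: the first two have a coefficient equal to $\pm 1$, while in the third $\frac{\#C-1}{2}$ is an integer (as $\#C$ is odd) and the coefficients $-1$ on the $f(z)$ force the greatest common divisor to be $1$. Substituting these into $\sigma_i(f)\geq n$ reproduces precisely the inequalities defining $\UUUUUn$, $\tUUUUU^{(n)}$ and $\qUUUUU^{(n)}$, completing the identification.

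I expect the facet determination to be the main obstacle. Showing that the listed inequalities are \emph{exactly} the facets --- neither omitting a facet nor retaining a redundant one --- is the polyhedral combinatorics of these stable-set-type polytopes, and it is here that one must rule out chords, non-maximal cliques, triangle-absorbed edges and length-$3$ cycles, and in particular establish that every chordless odd hole of length at least $5$ actually supports a facet of $\hstab(G)$ and $\tstab(G)$. By contrast, once the facets are known and their support functionals are seen to be primitive, the passage to symbolic powers is entirely formal.
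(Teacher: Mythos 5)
First, a point of comparison you could not have known: the paper itself contains \emph{no} proof of this statement. It is imported as a Fact from the author's earlier paper \cite{ghp} (Proposition 3.7 and Remark 3.10 there), so there is no internal argument to measure yours against, and your proposal has to stand on its own. Your formal skeleton is the standard one and is sound: $\ekp$ is a normal affine semigroup ring on a full-dimensional cone, the monomial divisorial ideals are the modules $D(\mathbf a)$ obtained by shifting the primitive facet functionals, divisorial multiplication adds shift vectors, Stanley's interior description together with primitivity gives $\omega_{\ekp}=D(\mathbf 1)$, and hence $\omega_{\ekp}^{(n)}=D(n\mathbf 1)$ for all $n\in\ZZZ$; the translation of the support functionals into the defining conditions of $\UUUUU^{(n)}$, $\tUUUUU^{(n)}$ and $\qUUUUU^{(n)}$ is also correct.

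The genuine gap is the one you flag yourself: the facet determination is asserted, not proved, and it is where all of the content of the Fact lives. Two separate things are missing. First, redundancy of the discarded inequalities is incomplete: you never argue that an odd cycle \emph{with} a chord yields a redundant inequality (this needs the decomposition of a chorded odd cycle into a shorter odd cycle plus a matching of the complementary path); without it even the $n=0$ case of the formula is not established. Second, and more seriously, you say nothing about irredundancy of the retained inequalities, i.e., why every maximal clique, every maximal element of $\msKKK$, and every chordless odd cycle of length at least $5$ actually defines a facet. This is not cosmetic: the formula for all $n\in\ZZZ$ forces the list to be \emph{exactly} the facets, and it is the negative $n$ that are sensitive to extras --- for the cone $\{x\geq 0,\ y\geq 0\}$ in $\RRR^2$ with the redundant inequality $x+y\geq 0$ adjoined, the point $(-1,-1)$ lies in $D(-\mathbf 1)$ but violates $x+y\geq -1$, so adjoining a redundant inequality wrecks precisely the case $n=-1$ that the surrounding paper uses (via $\UUUUU^{(-1)}$, $\tUUUUU^{(-1)}$, $\qUUUUU^{(-1)}$ in Lemmas \ref{lem:non pure non gor}, \ref{lem:gor pspec 1}, \ref{lem:gor pspec 2} and Theorem \ref{thm:nearly gor}). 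The missing irredundancy certificates are not hard and you should supply them: for a chordless odd cycle $C$ of length $2k+1\geq 5$, the point $f=(\tfrac{k}{2k+1}+\epsilon)\chi_C$ with $f(-\infty)=1$ satisfies every other defining inequality of $\hstab(G)$ (a clique meets $C$ in at most two vertices because $C$ has no chord, and any other odd cycle meets $C$ in a disjoint union of arcs, hence in at most $\#C'-1$ vertices) yet violates the inequality of $C$; analogous test points $(\tfrac{1}{\#K}+\epsilon)\chi_K$ handle the maximal-clique and $\msKKK$-type inequalities in all three polytopes. With these certificates (or a citation to the facet descriptions in the literature) your argument becomes a complete proof; without them it is a correct reduction of the Fact to an unproven polyhedral claim.
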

Further, we state the following.

\begin{definition}
\rm
For a graph $G$, we denote by $\omega(G)$ the maximum size of cliques of $G$.
If all maximal cliques (resp.\ maximal elements of $\msKKK(G)$) have the same
size, we say that $G$ is pure (resp.\ t-pure).
\end{definition}
Let $\eta$ be an element of $\ZZZ^{V^-}$ with $\eta(x)=1$
for $x\in V$ and $\eta(-\infty)=\omega(G)+1$ (resp.\
$\eta(-\infty)=\omega(G)+1$,  
$\eta(-\infty)=\min\{\omega(G),3\}+1$).
Then it is easily verified by Fact \ref{fact:symb power}
that $T^\eta$ is an element of $\omega_{\ekhsg}$
(resp.\ $\omega_{\ekqsg}$, $\omega_{\ektsg}$) of minimum degree.
Therefore,
we see the following.

\begin{lemma}
\mylabel{lem:a-inv}
$$
a(\ekhsg)=a(\ekqsg)=-\omega(G)-1
$$
and
$$
a(\ektsg)=-\min\{\omega(G),3\}-1.
$$
\end{lemma}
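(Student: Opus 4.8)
The plan is to compute the $a$-invariant of each of these Ehrhart rings directly from the combinatorial description of the canonical module supplied by Fact \ref{fact:symb power}. Recall that for a \cm\ $\NNN$-graded $\KKK$-algebra $R$ with canonical module $\omega_R$, the $a$-invariant satisfies $a(R)=-\min\{n\mid (\omega_R)_n\neq 0\}$; equivalently, it is the negative of the minimal degree in which the canonical module is nonzero. So the entire problem reduces to identifying, for each of $\omega_{\ekhsg}$, $\omega_{\ekqsg}$ and $\omega_{\ektsg}$, the least $\mu(-\infty)$ that can occur among elements $\mu$ of $\UUUUU^{(1)}$, $\qUUUUU^{(1)}$ and $\tUUUUU^{(1)}$ respectively (taking $n=1$ in Fact \ref{fact:symb power}, since the canonical module itself is the first symbolic power).

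First I would record the observation already made in the excerpt: the proposed witness $\eta$, defined by $\eta(x)=1$ for all $x\in V$ together with the stated value of $\eta(-\infty)$, does lie in the relevant index set and hence $T^\eta$ lies in the canonical ideal. The verification is a matter of checking the defining inequalities. For $\UUUUU^{(1)}$ we need $\eta(z)\geq 1$ (clear, as $\eta(z)=1$), $\eta^+(K)\leq\eta(-\infty)-1$ for every maximal clique $K$, and $\eta^+(C)\leq\eta(-\infty)\frac{\#C-1}{2}-1$ for every chordless odd cycle $C$ of length at least $5$. Since $\eta^+(K)=\#K\leq\omega(G)$, the clique condition becomes $\omega(G)\leq\eta(-\infty)-1$, which forces $\eta(-\infty)\geq\omega(G)+1$; taking equality gives the claimed minimal value. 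One then checks that the odd-cycle inequalities are automatically satisfied at this value, using that a chordless cycle of length $\ell\geq 5$ has $\eta^+(C)=\ell$ while the right-hand side is $(\omega(G)+1)\frac{\ell-1}{2}-1$, and this inequality holds comfortably because $\omega(G)\geq 2$ whenever such cycles are present (they contain edges, so $\omega(G)\geq 2$). The $\qUUUUU^{(1)}$ case is identical but omits the cycle condition, giving the same value $\omega(G)+1$; for $\tUUUUU^{(1)}$ the maximal cliques are replaced by maximal elements of $\msKKK(G)$, whose sizes are at most $\min\{\omega(G),3\}$, which produces $\eta(-\infty)=\min\{\omega(G),3\}+1$.

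The second, and more delicate, half is the matching lower bound: I must show that \emph{no} element $\mu$ of the index set can have $\mu(-\infty)$ strictly smaller than the value attained by $\eta$, so that $\eta$ genuinely realizes the minimal degree. Here I would argue as follows. Choose a maximal clique $K$ of maximum size $\omega(G)$ (resp.\ a maximal element of $\msKKK(G)$ of size $\min\{\omega(G),3\}$ in the t-case). For any $\mu$ in the index set, the constraints $\mu(z)\geq 1$ for all $z\in V$ give $\mu^+(K)=\sum_{z\in K}\mu(z)\geq\#K$, while the clique constraint gives $\mu^+(K)\leq\mu(-\infty)-1$. Combining, $\mu(-\infty)\geq\#K+1$, which is exactly $\omega(G)+1$ (resp.\ $\min\{\omega(G),3\}+1$). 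This shows the minimal degree is bounded below by the degree of $T^\eta$, and together with the previous paragraph the minimal degree equals that value; negating gives the stated $a$-invariants.

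The main obstacle I anticipate is not the lower bound, which is a short clean argument, but the bookkeeping in the upper-bound verification for $\UUUUU^{(1)}$ and $\tUUUUU^{(1)}$ that the odd-cycle inequalities are not violated at the critical value of $\eta(-\infty)$ --- one must confirm that these auxiliary constraints are slack rather than binding, so that they do not force $\eta(-\infty)$ any higher than the clique constraints already do. A secondary point worth a sentence is the degenerate situation: if $G$ has no edges then $\omega(G)=1$ and there are no chordless odd cycles, so the cycle conditions are vacuous and everything still goes through with $\eta(-\infty)=2$. Once these checks are in place the lemma follows immediately from the standard identity relating the $a$-invariant to the bottom degree of the canonical module.
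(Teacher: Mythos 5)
Your proof is correct and takes essentially the same approach as the paper: the paper likewise exhibits the all-ones witness $\eta$ with $\eta(-\infty)=\omega(G)+1$ (resp.\ $\min\{\omega(G),3\}+1$) and deduces the lemma from the fact that $T^\eta$ is an element of minimum degree in $\omega_{\ekhsg}$ (resp.\ $\omega_{\ekqsg}$, $\omega_{\ektsg}$) as described by Fact \ref{fact:symb power}. The paper leaves the details as ``easily verified,'' and what you supply --- the slackness of the odd-cycle inequalities at the critical degree and the matching lower bound obtained from a maximum clique (resp.\ a maximum-size element of $\msKKK(G)$) --- is precisely that verification.
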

We also showed the following.

\begin{fact}[{\cite[Theorem 3.8 and Remark 3.10]{ghp}}]
\mylabel{fact:gor cri}

\noindent
\begin{enumerate}
\item
$\ekhsg$ is \gor\ if and only if
\begin{enumerate}
\item
\mylabel{item:clique size}
$G$ is pure and
\item
\mylabel{item:3cond}
\begin{enumerate}
\item
\mylabel{item:1}
$\omega(G)=1$, 
\item
\mylabel{item:2}
$\omega(G)=2$ and there is no odd cycle without chord and length at least 7 or
\item
\mylabel{item:3}
$\omega(G)\geq 3$ and there is no odd cycle without chord and length at least 5.
\end{enumerate}
\end{enumerate}
\item
$\ektsg$ is \gor\ if and only if
\begin{enumerate}
\item
$E=\emptyset$,
\item
$G$ has no isolated vertex nor triangle and there is no odd cycle
without chord and length at least 7 or
\item
all maximal cliques of $G$ have size at least 3
and there is no odd cycle without chord and length at least 5.
\end{enumerate}
\item
$\ekqsg$ is \gor\ if and only if
$G$ is pure.
\end{enumerate}
\end{fact}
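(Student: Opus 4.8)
The plan is to reduce each of the three equivalences to the principality of the canonical ideal. Recall that a \cm\ graded normal domain $R$ with canonical module is \gor\ if and only if $\omega_R$ is a principal ideal; since $\omega_R$ is graded with lowest degree $-a(R)$, principality forces the degree $-a(R)$ component to be one-dimensional, generated by the monomial $T^\eta$ exhibited just before Lemma \ref{lem:a-inv}, and then $\omega_R=T^\eta R$. Thus for $R=\ekhsg$ I would prove that $R$ is \gor\ if and only if $\mu-\eta\in\UUUUU^{(0)}$ for every $\mu\in\UUUUU^{(1)}$, reading $\UUUUU^{(0)}$ as the monomial semigroup of $R$ and $\UUUUU^{(1)}$ as $\omega_R$ by Fact \ref{fact:symb power} (with the preliminary remark, from the facet description of $\hstab(G)$, that $\UUUUU^{(0)}$ is exactly the exponent semigroup of $R$, i.e.\ that maximal cliques and chordless odd cycles of length at least $5$ suffice).

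Next I would unwind this containment one defining inequality at a time via Definition \ref{def:un}. For a maximal clique $K$, the constraint $(\mu-\eta)^+(K)\leq(\mu-\eta)(-\infty)$ becomes, in the extremal case $\mu^+(K)=\mu(-\infty)-1$, the requirement $\#K\geq\omega(G)$; since always $\#K\leq\omega(G)$ this is equivalent to $\#K=\omega(G)$ for every maximal clique, that is, to $G$ being pure. Conversely, purity forces $\mu(z)=1$ on each maximal clique and hence $\mu=\eta$ in the bottom degree, so the lowest component is one-dimensional and the clique inequalities for $\mu-\eta$ hold automatically. For a chordless odd cycle $C$ of length at least $5$, the extremal case $\mu^+(C)=\mu(-\infty)\frac{\#C-1}{2}-1$ turns $(\mu-\eta)^+(C)\leq(\mu-\eta)(-\infty)\frac{\#C-1}{2}$ into the numerical condition $(\omega(G)+1)\frac{\#C-1}{2}\leq\#C+1$. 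A short computation shows this fails for every such cycle when $\omega(G)\geq 3$, holds precisely for $\#C=5$ (and fails for $\#C\geq 7$) when $\omega(G)=2$, and is vacuous when $\omega(G)=1$ since then $G$ has no edges; this reproduces the three alternatives in item (2). The statement for $\ekqsg$ is the same argument with the cycle inequalities deleted, leaving purity alone, and the statement for $\ektsg$ is the same argument with maximal cliques replaced by maximal elements of $\msKKK(G)$, with $\omega(G)$ replaced by $\min\{\omega(G),3\}$ and $\eta(-\infty)=\min\{\omega(G),3\}+1$ as in Lemma \ref{lem:a-inv}, and with purity replaced by t-purity; this moves the clique bookkeeping to cliques of size at most $3$ and yields the three listed cases.

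The step I expect to be the main obstacle is the ``only if'' direction, where the extremal bounds used above must be realized by genuine lattice points. Whenever a clique or cycle condition is violated I must actually exhibit a $\mu\in\UUUUU^{(1)}$ (resp.\ $\tUUUUU^{(1)}$) for which $\mu^+(K)$ or $\mu^+(C)$ attains its maximal value \emph{while every remaining defining inequality of $\UUUUU^{(1)}$ stays satisfied}, so that $\mu-\eta$ provably leaves the semigroup. Constructing these witnesses---placing the extra mass on a single short maximal clique, or distributing it along a long chordless odd cycle, without breaking the other clique and cycle constraints shared by neighboring vertices---is the delicate combinatorial core of the proof. Once such witnesses are in hand the equivalences follow formally from the principality reduction of the first paragraph.
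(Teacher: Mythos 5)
This statement is a \emph{Fact} in the paper, quoted from \cite[Theorem 3.8 and Remark 3.10]{ghp}; the paper itself contains no proof of it, so the only meaningful comparison is with the strategy of that reference, which (working from the lattice-point description of the symbolic powers in Fact \ref{fact:symb power}) is indeed the reduction you set up. Your framework and numerology are correct: $R$ is \gor\ if and only if $\omega_R=T^\eta R$, if and only if $\mu-\eta\in\UUUUU^{(0)}$ for every $\mu\in\UUUUU^{(1)}$ (and similarly in the $\qstab$ and $\tstab$ cases with the substitutions you indicate); purity is exactly what makes the maximal-clique inequalities for $\mu-\eta$ automatic; and the condition $(\omega(G)+1)\frac{\#C-1}{2}\leq\#C+1$ correctly reproduces the trichotomy --- vacuous for $\omega(G)=1$, equivalent to $\#C=5$ for $\omega(G)=2$, always false for $\omega(G)\geq 3$. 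The ``if'' directions of all three items are therefore essentially complete in your sketch.

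The genuine gap is the one you yourself flag and then postpone: the ``only if'' directions. Observing that ``in the extremal case $\mu^+(K)=\mu(-\infty)-1$ the requirement becomes $\#K\geq\omega(G)$'' proves nothing until, for each violated condition, you exhibit an actual $\mu\in\UUUUU^{(1)}$ (resp.\ $\tUUUUU^{(1)}$) for which $\mu-\eta$ leaves the semigroup --- and these witnesses are not routine, because the first guesses fail. For example, for $\omega(G)\geq 3$ and a chordless $5$-cycle $C$, the natural alternating witness (a large value $N$ on a maximum stable set of $C$, value $1$ elsewhere, $\mu(-\infty)=N+\omega(G)$) satisfies the relevant inequality for $\mu-\eta$ \emph{with equality}, so it detects nothing. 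A witness that works is, say for $\omega(G)=3$: value $3$ on all five vertices of $C$, value $1$ off $C$, $\mu(-\infty)=8$; this lies in $\UUUUU^{(1)}$ (tight against the cycle inequality, and compatible with all clique inequalities since a clique meets the chordless $C$ in at most two vertices), while $\mu-\eta$ has $(\mu-\eta)^+(C)=10>8=(\mu-\eta)(-\infty)\cdot\frac{\#C-1}{2}$. The same principle (a heavy uniform weight on the offending clique or cycle, with $\mu(-\infty)$ calibrated so that the witness is tight against its own constraint but respects all others) is what handles non-purity and the length-$\geq 7$ cycles when $\omega(G)=2$, with separate bookkeeping in the $\tUUUUU$ case. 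Since precisely these constructions are the combinatorial substance of \cite[Theorem 3.8]{ghp}, a plan that defers them is an outline of the proof rather than a proof.
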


The following lemma is very easily proved but very useful.

\begin{lemma}
\mylabel{lem:triv}
Suppose that $\eta\in\UUUUU^{(1)}$ (resp.\ $\tUUUUU^{(1)}$, $\qUUUUU^{(1)}$)
and $\zeta\in\UUUUU^{(-1)}$ (resp.\ $\tUUUUU^{(-1)}$, $\qUUUUU^{(-1)}$).
If $x\in V$ and $(\eta+\zeta)(x)=0$, then $\eta(x)=1$ and $\zeta(x)=-1$.
\end{lemma}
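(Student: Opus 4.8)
The plan is to notice that only the pointwise lower bound on the coordinates indexed by $V$ is relevant here; the clique inequalities and the odd-cycle inequalities in Definition \ref{def:un} play no role whatsoever. Since the three families $\UUUUU^{(n)}$, $\tUUUUU^{(n)}$ and $\qUUUUU^{(n)}$ all impose the identical condition ``$\mu(z)\geq n$ for every $z\in V$'', the three cases can be treated simultaneously, and I would write them up in a single argument.

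First I would extract the relevant inequality at the coordinate $x$. From $\eta\in\UUUUU^{(1)}$ (resp.\ $\tUUUUU^{(1)}$, $\qUUUUU^{(1)}$) and $x\in V$ one has $\eta(x)\geq 1$, and from $\zeta\in\UUUUU^{(-1)}$ (resp.\ $\tUUUUU^{(-1)}$, $\qUUUUU^{(-1)}$) one has $\zeta(x)\geq -1$. Adding these two bounds, and using $(\eta+\zeta)(x)=\eta(x)+\zeta(x)$, gives
\[
0=(\eta+\zeta)(x)=\eta(x)+\zeta(x)\geq 1+(-1)=0.
\]
Hence both of the inequalities $\eta(x)\geq 1$ and $\zeta(x)\geq -1$ must in fact be equalities, so $\eta(x)=1$ and $\zeta(x)=-1$, which is precisely the assertion.

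There is essentially no obstacle to overcome: the one thing to verify is that the bound $\mu(z)\geq n$ is literally part of the definition of each of the three families for the respective value of $n$, which is immediate from Definition \ref{def:un}. This is exactly why the lemma is, as advertised, very easily proved, while still being useful: it converts the \emph{global} equality $(\eta+\zeta)(x)=0$ into \emph{separate} equalities for $\eta$ and $\zeta$ at that coordinate, pinning down each value exactly.
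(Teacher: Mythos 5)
Your proof is correct and matches the paper's (implicit, one-line) argument exactly: the pointwise bounds $\eta(x)\geq 1$ and $\zeta(x)\geq -1$ from Definition \ref{def:un} force equality once their sum is $0$. This is the intended ``very easily proved'' argument, and your observation that the clique and odd-cycle inequalities are irrelevant here is exactly right.
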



\section{$\NNN$-graded $\KKK$-algebras which are \gor\ on the punctured
spectrum}
\mylabel{sec:pspec}

In this section, we study the \gor\ property on the punctured spectrum
of the Segre product of several $\NNN$-graded $\KKK$-algebras.
Although the results of the next section can be proved by weaker result
than stated in this section, we prove it in general form, since the \gor\
on the punctured spectrum  is an important property.

We first state the following.

\begin{definition}
\rm
Let $R$ be a \cm\ $\NNN$-graded $\KKK$-algebra.
If $R_\pppp$ is \gor\ for any $\pppp\in\spec(R)\setminus\{\mmmm_R\}$,
we say that $R$ is \gor\ on the punctured spectrum.
\end{definition}
By Fact \ref{fact:hhs2.1}, we see that $R$ is \gor\ on the punctured spectrum
if and only if $\trace(\omega_R)\supset\mmmm_R^n$ for $n\gg 0$.

Next we recall the following result.

\begin{fact}[{\cite[Lemma (4.4.1)]{gw}}]
\mylabel{fact:gw 4.4.1}
Let $R$ be an $\NNN$-graded $\KKK$-algebra of dimension $r$ with
linear $R$-regular element.
Then $(\underline{\mathrm{H}}_\mmmm^r(R))_n\neq 0$ for $n\leq a(R)$.
In particular, if $R$ is \cm, then $(\omega_R)_n\neq 0$ for $n\geq -a(R)$.
\end{fact}
It is assumed that $R$ is standard graded in \cite[Lemma (4.4.1)]{gw}.
However, the standardness is only used to ensure the existence of linear
$R$-regular element.
Further, the final assertion can be proved by the following way.

Let $x$ be a linear $R$-regular element of $R$.
Then, since $\omega_R$ is a maximal \cm\ $R$-module,
$x$ is also an $\omega_R$-regular element (see e.g., \cite[Theorem 17.2]{mat}).
Therefore, multiplication of $x$ induces an injection 
$\omega_R(-1)\to\omega_R$.
Therefore, $\dim_\KKK(\omega_R)_n\leq \dim_\KKK(\omega_R)_{n+1}$ for any $n\in\ZZZ$.
Since $(\omega_R)_{-a(R)}\neq0$, we see that $(\omega_R)_n\neq0$ for any $n\in\ZZZ$
with $n\geq -a(R)$.
 
Next we improve \cite[Proposition 2.2]{hmp}.
In \cite[Proposition 2.2]{hmp}, it is assumed that all rings involved are standard graded.

\begin{prop}
\mylabel{prop:hmp 2.2}
Let $R^{(i)}$ be a \cm\ $\NNN$-graded $\KKK$-algebra which has a linear
$R^{(i)}$-regular element, $\dim R^{(i)}\geq 2$ and $a(R^{(i)})<0$ for
$i=1$, \ldots, $t$.
Then
$$
R=R^{(1)}\#\cdots\# R^{(t)}
$$
is \cm, $\omega_R=\omega_{R^{(1)}}\#\cdots\#\omega_{R^{(t)}}$,
$a(R)=\min\{a(R^{(i)})\mid 1\leq i\leq t\}$, 
$\dim R=\sum_{i=1}^t \dim R^{(i)}-t+1$
and $R$ has a linear $R$-regular element.
\end{prop}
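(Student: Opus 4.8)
The plan is to follow the argument of \cite[Proposition 2.2]{hmp}, replacing its use of the standard-graded hypothesis by the existence of a linear regular element and by the sharper non-vanishing statement recorded in Fact~\ref{fact:gw 4.4.1}. By induction on $t$, using associativity of the Segre product, it suffices to treat the case $t=2$: once the two-factor case is known, applying it to $R'\define R^{(1)}\#\cdots\#R^{(t-1)}$ and $R^{(t)}$ yields the general statement, since the induction hypothesis makes $R'$ \cm\ with a linear regular element, with $\dim R'=\sum_{i=1}^{t-1}\dim R^{(i)}-(t-1)+1\geq 2$, $a(R')=\min\{a(R^{(i)})\mid 1\leq i<t\}<0$ and $\omega_{R'}=\omega_{R^{(1)}}\#\cdots\#\omega_{R^{(t-1)}}$. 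Write $A=R^{(1)}$, $B=R^{(2)}$, $d_A=\dim A$, $d_B=\dim B$ and $D=d_A+d_B-1$.

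The linear regular element is immediate: if $x_A\in A_1$ and $x_B\in B_1$ are linear regular elements of $A$ and $B$, set $\xi\define x_A\#x_B\in R_1$. Multiplication by $\xi$ on $R_n=A_n\otimes_\KKK B_n$ is the tensor product over $\KKK$ of the injective maps $x_A\cdot\colon A_n\to A_{n+1}$ and $x_B\cdot\colon B_n\to B_{n+1}$, and a tensor product of injective $\KKK$-linear maps is injective; hence $\xi$ is an $R$-regular element.

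Next I would compute the graded local cohomology of $R$ via the Segre-product formula of Goto and Watanabe \cite{gw}, which for two factors takes the form
$$[\underline{\mathrm{H}}^k_{\mmmm_R}(R)]_n=[\underline{\mathrm{H}}^k_{\mmmm_A}(A)]_n\otimes_\KKK B_n\oplus A_n\otimes_\KKK[\underline{\mathrm{H}}^k_{\mmmm_B}(B)]_n\oplus\bigoplus_{\substack{i+j=k+1\\i,j\geq 1}}[\underline{\mathrm{H}}^i_{\mmmm_A}(A)]_n\otimes_\KKK[\underline{\mathrm{H}}^j_{\mmmm_B}(B)]_n.$$
Since $A$ and $B$ are \cm\ of dimension at least $2$, $\underline{\mathrm{H}}^i_{\mmmm_A}(A)=0$ for $i<d_A$ and $\underline{\mathrm{H}}^j_{\mmmm_B}(B)=0$ for $j<d_B$, so in particular $\underline{\mathrm{H}}^0=\underline{\mathrm{H}}^1=0$ for both factors and only the top modules survive. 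The two one-sided terms vanish identically: in $[\underline{\mathrm{H}}^{d_A}_{\mmmm_A}(A)]_n\otimes_\KKK B_n$ the first factor is $0$ for $n\geq 0$ because $a(A)<0$, whereas $B_n=0$ for $n<0$, and symmetrically for $A_n\otimes_\KKK[\underline{\mathrm{H}}^{d_B}_{\mmmm_B}(B)]_n$. In the cross sum only the summand with $i=d_A$ and $j=d_B$ is nonzero, and it occurs precisely for $k=d_A+d_B-1=D$. Therefore $\underline{\mathrm{H}}^k_{\mmmm_R}(R)=0$ for $k\neq D$, while $\underline{\mathrm{H}}^D_{\mmmm_R}(R)=\underline{\mathrm{H}}^{d_A}_{\mmmm_A}(A)\#\underline{\mathrm{H}}^{d_B}_{\mmmm_B}(B)$, which is nonzero by Fact~\ref{fact:gw 4.4.1}. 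Hence $R$ is \cm\ of dimension $D=\dim A+\dim B-1$. Taking graded $\KKK$-duals and using local duality together with $(M\#N)^\vee=M^\vee\#N^\vee$ gives $\omega_R=\omega_A\#\omega_B$. Finally, by Fact~\ref{fact:gw 4.4.1} one has $[\underline{\mathrm{H}}^{d_A}_{\mmmm_A}(A)]_n\neq 0$ exactly for $n\leq a(A)$ and $[\underline{\mathrm{H}}^{d_B}_{\mmmm_B}(B)]_n\neq 0$ exactly for $n\leq a(B)$, so the largest degree in which $\underline{\mathrm{H}}^D_{\mmmm_R}(R)$ is nonzero is $\min\{a(A),a(B)\}$; that is, $a(R)=\min\{a(A),a(B)\}$.

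The main obstacle I anticipate is the correct handling of the Goto-Watanabe formula: confirming its exact shape and the degree shift in the cross terms, and verifying that under our two hypotheses no further contribution survives. The hypotheses are precisely calibrated for this: $\dim R^{(i)}\geq 2$ annihilates all sub-top local cohomology of the factors, and $a(R^{(i)})<0$ forces each one-sided term to vanish, since the (negatively graded) top local cohomology of one factor meets the (non-negatively graded) other factor in no degree. The one remaining delicate point is that the equality $a(R)=\min\{a(A),a(B)\}$---rather than merely an inequality---relies on the strengthened Fact~\ref{fact:gw 4.4.1}, which guarantees that the top local cohomology of each factor is nonzero in \emph{every} degree at most its $a$-invariant, so that the two non-vanishing ranges overlap all the way down to $\min\{a(A),a(B)\}$.
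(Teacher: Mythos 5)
Your proposal is correct, and its skeleton coincides with the paper's: the same induction reducing everything to the two-factor case, essentially the same tensor-product argument for the linear non-zerodivisor $x\#y$ (the paper spells out injectivity by writing $\alpha=\sum_j z_j\#w_j$ with the $z_j$ linearly independent; you invoke flatness of vector spaces), and the same use of the strengthened Fact~\ref{fact:gw 4.4.1} to get the \emph{exact} non-vanishing range and hence the equality $a(R)=\min_i a(R^{(i)})$ rather than a mere inequality. The difference lies in the homological core. The paper treats Cohen--Macaulayness, the dimension formula and $\omega_R=\omega_{R^{(1)}}\#\omega_{R^{(2)}}$ as black boxes, citing Theorems (4.2.3) and (4.3.1) of \cite{gw}, which hold for general (not necessarily standard) $\NNN$-graded $\KKK$-algebras; this is precisely why the improvement of \cite[Proposition 2.2]{hmp} comes almost for free once a linear non-zerodivisor replaces standardness in Fact~\ref{fact:gw 4.4.1}. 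You instead re-derive those two theorems from the deeper Goto--Watanabe K\"unneth formula for local cohomology of Segre products together with graded local duality. This buys a more self-contained argument that makes visible exactly where each hypothesis enters ($\dim R^{(i)}\geq 2$ kills the sub-top local cohomology, $a(R^{(i)})<0$ kills the one-sided terms) and yields $\dim R=\mathrm{depth}\,R=D$ in a single stroke; the price is that the burden of proof shifts to the K\"unneth formula itself, on which two points of care are needed. First, the clean direct-sum shape you wrote, with $A_n$ and $B_n$ appearing in the one-sided terms, is only valid because both factors have depth at least $2$, i.e.\ $\underline{\mathrm{H}}^0=\underline{\mathrm{H}}^1=0$; for general modules there are correction terms (the naive formula already fails for $M=N=\KKK$ over polynomial rings), so the restriction $i,j\geq 1$ and the identification of the one-sided terms must be justified under your hypotheses, as you anticipated. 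Second, you need the formula and local duality in the non-standard graded setting, which is indeed the generality of \cite{gw}; the paper's choice to cite the packaged theorems quietly sidesteps both checks. Finally, note that the paper computes the $a$-invariant on the $\omega$ side, via $(\omega_{R^{(i)}})_n\neq 0$ iff $n\geq -a(R^{(i)})$, whereas you work on the local cohomology side; the two are equivalent by duality and both hinge on the same strengthened non-vanishing statement.
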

\begin{proof}
We prove by induction on $t$.
The case where $t=1$ is trivial.
Suppose that $t>1$ and set $S=R^{(2)}\#\cdots\# R^{(t)}$.

By induction hypothesis, we see that $S$ is \cm, 
$\omega_S=\omega_{R^{(2)}}\#\cdots\#\omega_{R^{(t)}}$,
$\dim S=\sum_{i=2}^t\dim R^{(i)}-t+2$,
$a(S)=\min\{a(R^{(i)})\mid 2\leq i\leq t\}$ 
and $S$ has a linear $S$-regular element.
Since $\dim S\geq 2(t-1)-t+2=t\geq 2$,
we see by \cite[Theorem (4.2.3)]{gw} that 
$R=R^{(1)}\#S$ is \cm,
$\dim R=\dim R^{(1)}+\dim S-1=\sum_{i=1}^t\dim R^{(i)}-t+1$.
Further, by \cite[Theorem (4.3.1)]{gw}, we see that
$\omega_R=\omega_{R^{(1)}}\#\omega_S=\omega_{R^{(1)}}\#\cdots\#\omega_{R^{(t)}}$.
Moreover, by Fact \ref{fact:gw 4.4.1}, we see that $(\omega_S)_n\neq 0$
for $n\geq -a(S)$ and $(\omega_{R^{(1)}})_n\neq 0$ for $n\geq -a(R^{(1)})$.
Thus, $(\omega_R)_n\neq 0$ if and only if $n\geq \max\{-a(R^{(1)}), -a(S)\}
=-\min\{a(R^{(1)}), a(S)\}$
and therefore $a(R)=\min\{a(R^{(1)}), a(S)\}=\min\{a(R^{(i)})\mid 1\leq i\leq t\}$.

Finally, let $x$ (resp.\ $y$) be a linear $R^{(1)}$-regular (resp.\ $S$-regular)
element of $R^{(1)}$ (resp.\ $S$).
We show that $x\#y$ is a linear $R^{(1)}\#S$-regular element.
It is enough to show that for any homogeneous element $\alpha$ of 
$R^{(1)}\#S$ with $\alpha\neq 0$, $(x\#y)\alpha\neq0$.
Let $\deg\alpha=d$ and write $\alpha=\sum_{j=1}^\ell z_j\#w_j$,
where $z_j\in R^{(1)}_d$ (resp.\ $w_j\in S_d$), 
$z_1$, \ldots, $z_\ell$ are linearly independent over $\KKK$ and
$w_j\neq 0$ for any $1\leq j\leq \ell$.
Then $(x\#y)\alpha=\sum_{j=1}^\ell xz_j\#yw_j$, $xz_1$, \ldots, $xz_\ell$
are linearly independent over $\KKK$ and $yw_j\neq0$ for any $1\leq j\leq \ell$,
since $x$ (resp.\ $y$) is an $R^{(1)}$-regular (resp.\ $S$-regular) element.
Therefore, $(x\#y)\alpha=\sum_{j=1}^\ell xz_j\#yw_j\neq 0$.
\end{proof}

Next we show the following.

\begin{lemma}
\mylabel{lem:trace elem}
Let $R$ be an $\NNN$-graded $\KKK$-algebra and $M$ a finitely generated graded
$R$-module.
Suppose that $\hom_R(M,R)$ is generated by homogeneous elements $\varphi_1$, \ldots,
$\varphi_\ell$.
Set $\deg\varphi_i=d_i$ for $1\leq i\leq \ell$.
Then for any homogeneous element $x\in\trace(M)$ of degree $d$, there are
homogeneous elements $\alpha_1$, \ldots, $\alpha_\ell\in M$ of
degree $d-d_1$, \ldots, $d-d_\ell$ respectively
(we define that the 0 element has arbitrary degree as in \cite[Section 1.5]{bh})
such that $x=\sum_{j=1}^\ell\varphi_j(\alpha_j)$.
\end{lemma}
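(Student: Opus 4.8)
The plan is to exploit the graded structure of $\hom_R(M,R)$. Since $M$ is a finitely generated graded $R$-module, $\hom_R(M,R)$ is itself a graded $R$-module, its degree-$e$ component consisting of those $\varphi$ with $\varphi(M_n)\subseteq R_{n+e}$ for all $n$, and every homomorphism decomposes as a finite sum of homogeneous ones (see \cite[Section 1.5]{bh}). The $R$-module structure here is given by $(r\varphi)(m)=r\varphi(m)=\varphi(rm)$. Because $\hom_R(M,R)$ is generated as an $R$-module by the homogeneous elements $\varphi_1,\ldots,\varphi_\ell$ of degrees $d_1,\ldots,d_\ell$, any homogeneous $\psi\in\hom_R(M,R)$ of degree $e$ can be written as $\psi=\sum_{j=1}^\ell r_j\varphi_j$ with $r_j\in R$ homogeneous of degree $e-d_j$.

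First I would reduce to a sum over homogeneous data. By definition of the trace, the given homogeneous element $x$ of degree $d$ is a finite sum $x=\sum_k\psi_k(m_k)$ with $\psi_k\in\hom_R(M,R)$ and $m_k\in M$. Decomposing each $\psi_k$ and each $m_k$ into homogeneous components and expanding, each resulting term $\psi_{k,e}(m_{k,n})$ is homogeneous of degree $e+n$; comparing degrees and keeping only the degree-$d$ part, I may assume that each $\psi_k$ is homogeneous, say of degree $e_k$, and each $m_k$ is homogeneous of degree $d-e_k$.

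Next I would expand using the generators. Writing $\psi_k=\sum_{j=1}^\ell r_{k,j}\varphi_j$ with $r_{k,j}\in R$ homogeneous of degree $e_k-d_j$, the identity $(r\varphi)(m)=\varphi(rm)$ gives
$$
\psi_k(m_k)=\sum_{j=1}^\ell r_{k,j}\varphi_j(m_k)=\sum_{j=1}^\ell\varphi_j(r_{k,j}m_k),
$$
where $r_{k,j}m_k\in M$ is homogeneous of degree $(e_k-d_j)+(d-e_k)=d-d_j$. Setting $\alpha_j\define\sum_k r_{k,j}m_k$, each $\alpha_j$ is a homogeneous element of $M$ of degree $d-d_j$ (with the convention that $0$ has arbitrary degree, covering the case $\alpha_j=0$), and summing over $k$ yields $x=\sum_{j=1}^\ell\varphi_j(\alpha_j)$, as required.

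The computation is routine bookkeeping; the only point demanding care is the reduction to homogeneous $\psi_k$ and $m_k$, which relies on the finiteness of the homogeneous decomposition of a homomorphism out of the finitely generated module $M$ together with the homogeneity of $x$, used to discard all components of degree other than $d$.
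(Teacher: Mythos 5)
Your proof is correct and follows essentially the same route as the paper: reduce to a sum $\sum_k\psi_k(m_k)$ with $\psi_k$ and $m_k$ homogeneous of complementary degrees, expand each $\psi_k$ over the generators $\varphi_j$ with homogeneous coefficients, move those coefficients inside via $(r\varphi)(m)=\varphi(rm)$, and collect the terms into $\alpha_j$. The only difference is that you spell out the reduction to homogeneous $\psi_k$, $m_k$ (using that $\hom_R(M,R)$ is graded since $M$ is finitely generated), which the paper simply asserts.
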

\begin{proof}
Since $x\in \trace(M)$, there are homogeneous elements $\psi_1$, \ldots, $\psi_m\in\hom_R(M,R)$
and homogeneous elements $\gamma_1$, \ldots, $\gamma_m\in M$ with $\deg\gamma_i=d-\deg\psi_i$
for $1\leq i\leq m$ such that
$x=\sum_{i=1}^m\psi_i(\gamma_i)$.
Set $\psi_i=\sum_{j=1}^\ell \beta_{ij}\varphi_j$ for $1\leq i\leq m$,
where $\beta_{ij}$ is a homogeneous element of $R$ with degree $\deg\psi_i-d_j$.
Then
$$
x=\sum_{i=1}^m\psi_i(\gamma_i)
=\sum_{i=1}^m(\sum_{j=1}^\ell \beta_{ij}\varphi_j)(\gamma_i)
=\sum_{j=1}^\ell \varphi_j(\sum_{i=1}^m\beta_{ij}\gamma_i)
$$
and $\beta_{ij}\gamma_i$ is a homogeneous element of $M$ with degree $d-d_j$ for any $i$.
Therefore, it is enough to set $\alpha_j=\sum_{i=1}^m\beta_{ij}\gamma_i$ for $1\leq j\leq \ell$.
\end{proof}
The following fact is easily proved.

\begin{lemma}
\mylabel{lem:stand segre}
Let $R^{(1)}$, \ldots, $R^{(t)}$ be standard graded $\KKK$-algebras.
Then 
$$
R^{(1)}\#\cdots\#R^{(t)}
$$
is also a standard graded $\KKK$-algebra.
\end{lemma}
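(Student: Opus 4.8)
The plan is to verify directly the defining properties of a standard graded $\KKK$-algebra for $R\define R^{(1)}\#\cdots\#R^{(t)}$: that $R_0=\KKK$, that $R$ is finitely generated, and that $R$ is generated by its degree-$1$ part. The first is immediate, since $R_0=R^{(1)}_0\otimes_\KKK\cdots\otimes_\KKK R^{(t)}_0=\KKK\otimes_\KKK\cdots\otimes_\KKK\KKK=\KKK$ using $R^{(i)}_0=\KKK$. For the remaining two, the heart of the matter is to show that each homogeneous component $R_n=R^{(1)}_n\otimes_\KKK\cdots\otimes_\KKK R^{(t)}_n$ lies in the subalgebra generated by the degree-$1$ part $R_1=R^{(1)}_1\#\cdots\#R^{(t)}_1$.

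First I would reduce to simple tensors of monomials. Since each $R^{(i)}$ is standard graded, its degree-$n$ part satisfies $R^{(i)}_n=(R^{(i)}_1)^n$; that is, $R^{(i)}_n$ is spanned over $\KKK$ by products $a^{(i)}_1a^{(i)}_2\cdots a^{(i)}_n$ with each $a^{(i)}_k\in R^{(i)}_1$. By the multilinearity of the tensor product, $R_n$ is therefore spanned over $\KKK$ by simple tensors of the form $\bigl(\prod_{k=1}^n a^{(1)}_k\bigr)\#\cdots\#\bigl(\prod_{k=1}^n a^{(t)}_k\bigr)$, where each $a^{(i)}_k\in R^{(i)}_1$.

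The key step is then the factorization inside the Segre product. Because multiplication in $R$ is the componentwise multiplication inherited from the ring $R^{(1)}\otimes_\KKK\cdots\otimes_\KKK R^{(t)}$, each such spanning tensor factors as $\prod_{k=1}^n\bigl(a^{(1)}_k\#\cdots\#a^{(t)}_k\bigr)$, a product of $n$ elements of $R_1$. Hence $R_n$ is contained in the $\KKK$-span of $n$-fold products of elements of $R_1$, and so $R$ is generated in degree $1$. Finite generation then follows at once: each $R^{(i)}_1$ is a finite-dimensional $\KKK$-vector space, since $R^{(i)}$ is a finitely generated graded algebra, so $R_1$ is finite-dimensional, and any finite $\KKK$-basis of $R_1$ generates $R$ as a $\KKK$-algebra.

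There is no serious obstacle here, as the statement itself indicates; the only point requiring any care is the passage from the single-tensor identity to an arbitrary homogeneous element, which is handled by the multilinearity reduction above, together with the observation that the product in the Segre product is the restriction of the product in the full tensor product ring, so that degree-$1$ factors indeed multiply componentwise as claimed.
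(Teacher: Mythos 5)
Your proof is correct. The paper gives no proof of this lemma at all (it is introduced only with the remark ``The following fact is easily proved''), and your argument --- writing $R^{(i)}_n=(R^{(i)}_1)^n$ by standardness, reducing to simple tensors by multilinearity, and factoring each tensor $\bigl(\prod_k a^{(1)}_k\bigr)\otimes\cdots\otimes\bigl(\prod_k a^{(t)}_k\bigr)$ componentwise into a product of $n$ elements of degree one, plus the finite-dimensionality of the degree-one piece --- is precisely the routine verification the author had in mind.
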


Now we prove the following.

\begin{thm}
\mylabel{thm:gor pspec general}
Let $R^{(1)}$, \ldots, $R^{(t)}$ be \cm\ $\NNN$-graded $\KKK$-algebras \gor\
on the punctured spectrum and $\dim R^{(i)}\geq 2$, $a(R^{(i)})<0$ and
$R^{(i)}$ has a linear $R^{(i)}$-regular element for every $i$ with $1\leq i\leq t$.
If at most one of $R^{(1)}$, \ldots, $R^{(t)}$ is not standard graded,
then
$$
R=R^{(1)}\#\cdots\#R^{(t)}
$$
is \gor\ on the punctured spectrum.
\end{thm}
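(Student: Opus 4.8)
The plan is to reduce to the two-factor case and then to verify the criterion for ``\gor\ on the punctured spectrum'' recorded just after the definition, namely that $\trace(\omega_R)\supset\mmmm_R^n$ for $n\gg 0$; for this it suffices to show $R_{\geq m_0}\subset\trace(\omega_R)$ for some $m_0$, since then $\trace(\omega_R)\supset R_{\geq m_0}\supset\mmmm_R^{m_0}$.

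First I would set up the induction on $t$, paralleling the proof of Proposition~\ref{prop:hmp 2.2}. The case $t=1$ is the hypothesis. For $t>1$, after reordering I may assume $R^{(2)},\ldots,R^{(t)}$ are all standard graded; then $S=R^{(2)}\#\cdots\#R^{(t)}$ is standard graded by Lemma~\ref{lem:stand segre}, and Proposition~\ref{prop:hmp 2.2} shows $S$ is \cm\ with $\dim S\geq 2$, $a(S)<0$, $\omega_S=\omega_{R^{(2)}}\#\cdots\#\omega_{R^{(t)}}$ and a linear regular element, while the induction hypothesis (no factor of $S$ is non-standard) gives that $S$ is \gor\ on the punctured spectrum. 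Writing $R=R^{(1)}\#S$ then reduces everything to the two-factor statement: $A\#B$ is \gor\ on the punctured spectrum whenever $A,B$ satisfy the hypotheses and $B$ is standard graded.

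For the two-factor case, Proposition~\ref{prop:hmp 2.2} gives $\omega_R=\omega_A\#\omega_B$, and the goal becomes to realize each $a\#b$ with $a\in A_m$, $b\in B_m$ ($m\gg 0$) as an element of $\trace(\omega_R)$. The basic construction is the product homomorphism: if $\varphi\in\hom_A(\omega_A,A)$ and $\psi\in\hom_B(\omega_B,B)$ are homogeneous of the \emph{same} degree $d$, then $\varphi\#\psi$ is a well-defined degree-$d$ element of $\hom_R(\omega_R,R)$ with $(\varphi\#\psi)(\alpha\#\beta)=\varphi(\alpha)\#\psi(\beta)$. Since $A$ is \gor\ on the punctured spectrum, $A_m=\trace(\omega_A)_m$ for $m\gg 0$, so by Lemma~\ref{lem:trace elem} I may write $a=\sum_k\varphi_k(\alpha_k)$ for a fixed finite homogeneous generating set $\varphi_1,\ldots,\varphi_\ell$ of $\hom_A(\omega_A,A)$ of degrees $e_1,\ldots,e_\ell$. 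It then suffices, for each $k$, to represent $b$ as $\psi_k(\beta_k)$ with $\psi_k\in\hom_B(\omega_B,B)$ homogeneous of degree $e_k$ and $\beta_k\in(\omega_B)_{m-e_k}$, for then $\varphi_k(\alpha_k)\#b=(\varphi_k\#\psi_k)(\alpha_k\#\beta_k)\in\trace(\omega_R)$. The standard-gradedness of $B$ is what makes such representations available: using Fact~\ref{fact:hhs1.1} to identify $\hom_B(\omega_B,B)$ with the fractional ideal $\omega_B^{-1}$, multiplication by $B_1$ raises the degree of a homomorphism, so (together with Fact~\ref{fact:gw 4.4.1}, which gives $(\omega_B)_j\neq 0$ in all large degrees) the ideal generated by the images of the degree-$d$ homomorphisms fills up $B$ in high degrees once $d$ is large enough; I would make this precise by showing that $(\omega_B^{-1})_d\,\omega_B$ is $\mmmm_B$-primary for all sufficiently large $d$.

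The main obstacle is exactly this degree reconciliation between the two factors. The degrees $e_k$ of a minimal homogeneous generating set of $\hom_A(\omega_A,A)$ are fixed but may be small (as negative as $a(A)$), whereas the filling-up on the $B$-side is only guaranteed for large target degree; thus a low-degree generator $\varphi_k$ has no matching $B$-homomorphism and the naive pairing breaks. Here I expect to use the linear non-zerodivisors $s\in A_1$ and $v\in B_1$: multiplication by $s$ raises the degree of an $A$-homomorphism, and $z=s\#v\in R_1$ is a linear non-zerodivisor of $R$ by the argument at the end of Proposition~\ref{prop:hmp 2.2}, so after multiplying $a\#b$ by a fixed power of $z$ one can arrange that only homomorphisms of large degree occur and the pairing goes through. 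The delicate remaining point, and where the hypothesis that all but one factor be standard graded is genuinely used, is to remove this spurious power of $z$, that is, to control the primes $\pppp\supset(z)$ with $\pppp\neq\mmmm_R$, equivalently to handle the failure of $A$ to be generated in degree $1$ in bounded degrees. This is also why the naive geometric route---deducing Gorensteinness of $R$ from $\mathrm{Proj}\,R\cong\mathrm{Proj}\,A\times\mathrm{Proj}\,B$---is unavailable, since that identification presupposes standard gradedness, so the trace computation cannot be bypassed.
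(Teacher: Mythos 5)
Your reduction to the two-factor case $R=A\#B$ (with $A=R^{(1)}$ possibly non-standard and $B=R^{(2)}$ standard graded), the identification $\omega_R=\omega_A\#\omega_B$, and the pairing $\varphi\#\psi$ of equal-degree homomorphisms all coincide with the paper's proof, and you correctly isolate the degree-reconciliation problem as the crux. But your resolution of that problem has a genuine gap. What your argument actually delivers is an inclusion of the form $z^M R_{\geq N_0}\subset\trace(\omega_R)$ for a fixed linear non-zerodivisor $z=s\#v$ and a fixed power $M$; by Fact \ref{fact:hhs2.1} this only shows that the non-\gor\ locus of $R$ is contained in $\{\pppp\in\spec(R)\mid z\in\pppp\}\cup\{\mmmm_R\}$, not in $\{\mmmm_R\}$. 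Since $\dim R=\dim A+\dim B-1\geq 3$ and $z$ is a non-unit non-zerodivisor, there are plenty of primes $\pppp\neq\mmmm_R$ containing $z$, and nothing in your setup shows $R_\pppp$ is \gor\ at such primes: trace ideals are not saturated, so $z^Mx\in\trace(\omega_R)$ does not imply $x\in\trace(\omega_R)$, and you give no mechanism for ``removing'' the factor $z^M$. That removal is not a delicate loose end to be tightened later; it is exactly the difficulty the theorem poses, so the proposal is incomplete at its decisive point.

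The paper's proof shows how to avoid ever introducing the spurious factor, and the device is worth internalizing. Instead of raising the degrees of the $A$-side homomorphisms by an \emph{external} multiplication with $z^M$, it raises them by an \emph{internal} decomposition of the element $x_1\in A_N$ itself: choose homogeneous generators $z_1,\ldots,z_u$ of the $\mmmm_A$-primary ideal $A_{\geq d+d'}$, of degrees at most $d''$, and for $N\geq d+d'+d''$ write $x_1=\sum_k w_kz_k$. Each $w_k$ then automatically has degree at least $d+d'$, hence lies in $\trace(\omega_A)$ and equals $\sum_j\varphi_{1j}(\alpha_{jk})$ by Lemma \ref{lem:trace elem}, giving the exact identity $x_1=\sum_{k,j}(z_k\varphi_{1j})(\alpha_{jk})$ in which every homomorphism $z_k\varphi_{1j}$ has large degree. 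The matching degree $d'''=\deg(z_k\varphi_{1j})-\deg\varphi_{2\jmath'}\geq d$ is then manufactured on the $B$-side by factoring $f_\ell\in B_{N-d}$ as $\sum_{k'}g_{k'}h_{k'}$ with $h_{k'}\in B_{d'''}$, which is possible precisely because $B$ is standard graded. Since the high-degree multipliers $z_k$ are factors of $x_1$ rather than extraneous ones, there is nothing to divide out afterwards, and one gets $R_{\geq d+d'+d''}\subset\trace(\omega_R)$ directly. Note also that this corrects your reading of where the hypothesis enters: standard gradedness of $B$ is used for the degree matching on the $B$-side, while the non-standard factor $A$ is handled by the primary-ideal decomposition above, not by any standardness. (Your auxiliary claim that $(\omega_B^{-1})_e\,\omega_B$ is $\mmmm_B$-primary for $e\gg 0$ is correct for standard graded $B$ and plays the same role as the factorization of $f_\ell$, but it cannot compensate for the missing step on the $A$-side.)
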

\begin{proof}
We prove by induction on $t$.
The case where $t=1$ is trivial.
We prove the case where $t=2$ later.
If $t\geq 3$, set $S=R^{(2)}\#\cdots\#R^{(t)}$.
Then by Proposition \ref{prop:hmp 2.2}, we see that $S$ is a \cm\ $\NNN$-graded 
$\KKK$-algebra with $\dim S\geq 2$, $a(S)<0$ and has a linear $S$-regular element.
Further, by induction hypothesis, $S$ is \gor\ on the punctured spectrum.
If $R^{(1)}$ is not standard graded, then by Lemma \ref{lem:stand segre}, we see 
that $S$ is a standard graded $\KKK$-algebra.
Thus, at most one of $R^{(1)}$ or $S$ is not standard graded.
Therefore, by the case where $t=2$, we see that $R=R^{(1)}\#S$ is \gor\ on the
punctured spectrum.

Now we prove the case where $t=2$.
Note by Proposition \ref{prop:hmp 2.2}, $\omega_R=\omega_{R^{(1)}}\#\omega_{R^{(2)}}$.

By symmetry, we may assume that $R^{(2)}$ is standard graded.
Take a positive integer $d$ with $\trace(\omega_{R^{(i)}})\supset R_{\geq d}^{(i)}$ for $i=1$, $2$.
Further, take homogeneous generators $\varphi_{i1}$, \ldots, $\varphi_{is_i}$ of
$\hom_{R^{(i)}}(\omega_{R^{(i)}},R^{(i)})$ for $i=1$, $2$.
Set $\deg \varphi_{ij}=d'_{ij}$ for any $i$ and $j$ and 
$d'=\max\{|d'_{1j}|+|d'_{2\jmath'}|\mid 1\leq j\leq s_1, 1\leq \jmath'\leq s_2\}$.
Take homogeneous generators $z_1$, \ldots, $z_u$ of the ideal $R^{(1)}_{\geq d+d'}$,
set $\deg z_i=d''_i$ for $1\leq i\leq u$ and
$d''=\max\{d''_1,\ldots, d''_u\}$.

Let $N$ be an arbitrary integer with $N\geq d+d'+d''$.
We show that $\trace(\omega_R)\supset R_N$.

Let $x_i$ be an arbitrary element of $R^{(i)}_N$ for $i=1$, $2$.
Since $x_1\in R^{(1)}_N\subset R^{(1)}_{\geq d+d'}$, 
there are homogeneous elements $w_1$, \ldots, $w_u$ of $R$ 
with degree $N-d''_1$, \ldots, $N-d''_u$ respectively
and
$$
x_1=w_1z_1+\cdots+w_uz_u.
$$
Then, since $\deg z_k\leq d''$, we see that $\deg w_k\geq d+d'$.
In particular, $w_k\in R^{(1)}_{\geq d+d'}\subset \trace(\omega_{R^{(1)}})$ for $1\leq k\leq u$.
Therefore, by Lemma \ref{lem:trace elem}, we can write 
$$
w_k=\sum_{j=1}^{s_1}\varphi_{1j}(\alpha_{jk})
\qquad\mbox{ for $1\leq k\leq u$.}
$$
Then 
$$
x_1=\sum_{k=1}^u\sum_{j=1}^{s_1}\varphi_{1j}(\alpha_{jk})z_k
=\sum_{k=1}^u\sum_{j=1}^{s_1}(z_k\varphi_{1j})(\alpha_{jk}).
$$
Thus, it is enough to show that
$$
(z_k\varphi_{jk})(\alpha_{jk})\otimes x_2\in\trace(\omega_R)
$$
for any $k$ and $j$.

We fix $k$ and $j$ with $1\leq k\leq u$ and $1\leq j\leq s_1$.
Since $R^{(2)}$ is standard graded, $x_2\in R^{(2)}_N$ and $N>d$, we can write
$x_2$ as
$$
x_2=f_1y_1+\cdots +f_vy_v,
$$
where $y_\ell\in R^{(2)}_d$ and $f_\ell\in R^{(2)}_{N-d}$ for $1\leq \ell\leq v$.
Further, since $y_\ell\in R^{(2)}_d\subset\trace(\omega_{R^{(2)}})$, 
we see by Lemma \ref{lem:trace elem} that there are homogeneous elements
$\beta_{1\ell}$, \ldots, $\beta_{s_2\ell}$ of degree $d-d'_{21}$, \ldots, $d-d'_{2s_2}$
respectively with
$$
y_\ell=\sum_{\jmath'=1}^{s_2}\varphi_{2\jmath'}(\beta_{\jmath'\ell}).
$$
Since 
$$
x_2=\sum_{\ell=1}^v\sum_{\jmath'=1}^{s_2}\varphi_{2\jmath'}(\beta_{\jmath'\ell})f_\ell,
$$
it is enough to show that
$$
(z_k\varphi_{1j})(\alpha_{jk})\otimes \varphi_{2\jmath'}(\beta_{\jmath'\ell})f_\ell
\in\trace(\omega_R)
$$
for any $\ell$ and $\jmath'$ with $1\leq \ell\leq v$ and $1\leq \jmath'\leq s_2$.

We fix $\ell$ with $1\leq \ell\leq v$ and $\jmath'$ with $1\leq \jmath'\leq s_2$ 
and set 
$$
d'''=\deg(z_k\varphi_{1j})-\deg\varphi_{2\jmath'}=d''_k+d'_{1j}-d'_{2\jmath'}.
$$
Then
$$
d'''\geq d+d'+d'_{1j}-d'_{2\jmath'}\geq d,
$$
since $z_k\in R^{(1)}_{\geq d+d'}$ and
$$
N-d-d'''\geq d+d'+d''-d-d''_k-d'_{1j}+d'_{2\jmath'}\geq 0.
$$
Therefore, we can write 
$$
f_\ell=g_1h_1+\cdots+g_wh_w,
$$
where $g_{k'}\in R^{(2)}_{N-d-d'''}$ and $h_{k'}\in R^{(2)}_{d'''}$
for any $k'$ with $1\leq k'\leq w$,
since $R^{(2)}$ is standard graded and $f_\ell\in R^{(2)}_{N-d}$.
Then
$$
\varphi_{2\jmath'}(\beta_{\jmath'\ell})f_\ell=
\sum_{k'=1}^w \varphi_{2\jmath'}(\beta_{\jmath'\ell})g_{k'}h_{k'}
=\sum_{k'=1}^w (h_{k'}\varphi_{2\jmath'})(g_{k'}\beta_{\jmath'\ell}).
$$
Since
$$
\deg(h_{k'}\varphi_{2\jmath'})=d'''+\deg\varphi_{2\jmath'}=\deg(z_k\varphi_{1j}),
$$
we see that the map
$$
z_k\varphi_{1j}\otimes h_{k'}\varphi_{2\jmath'}\colon
\omega_{R^{(1)}}\otimes\omega_{R^(2)}\to R^{(1)}\otimes R^{(2)}
$$
is an element of
$
\hom_{R^{(1)}\#R^{(2)}}(
\omega_{R^{(1)}}\#\omega_{R^(2)},R^{(1)}\# R^{(2)})
=\hom_R(\omega_R,R)
$.
Further, since
$$
\deg \alpha_{jk}=N-\deg(z_k\varphi_{1j})=N-\deg(h_{k'}\varphi_{2\jmath'})
=\deg(g_{k'}\beta_{\jmath'\ell}),
$$
we see that
$$
(z_k\varphi_{1j})(\alpha_{jk})\otimes(h_{k'}\varphi_{2\jmath'})(g_{k'}\beta_{\jmath'\ell})
=((z_k\varphi_{1j})\#(h_{k'}\varphi_{2\jmath'}))(\alpha_{jk}\#g_{k'}\beta_{\jmath'\ell})
\in\trace(\omega_R)
$$
for any $k'$.
Therefore,
$$
(z_k\varphi_{1j})(\alpha_{jk})\otimes 
\varphi_{2\jmath'}(\beta_{\jmath'\ell})f_\ell
=\sum_{k'=1}^w(z_k\varphi_{1j})(\alpha_{jk})\otimes(h_{k'}\varphi_{2\jmath'})
(g_{k'}\beta_{\jmath'\ell})
\in\trace(\omega_R).
$$

Thus, we have shown that $R_N\subset\trace(\omega_R)$.
Since $N$ is an arbitrary integer with $N\geq d+d'+d''$, we see that
$R_{\geq d+d'+d''}\subset\trace(\omega_R)$.
Since $R_{\geq d+d'+d''}$ is an $\mmmm_R$-primary ideal, we see that
$R$ is \gor\ on the punctured spectrum.
\end{proof}


\section{Nearly \gor\ property of the Ehrhart rings of the stable set polytopes
of h-perfect, perfect and t-perfect graphs}
\mylabel{sec:h-perfect}

In this section, we state a characterization of nearly \gor\ property of the 
Ehrhart rings of the stable set polytopes of h-perfect, perfect and t-perfect graphs.
We begin with the following.

\begin{lemma}
\mylabel{lem:non pure non gor}
Let $G=(V,E)$ be a graph and set
$R=\ekhsg$ (resp.\ $\ekqsg$, $\ektsg$).
If there are maximal cliques (resp.\
maximal cliques, maximal elements of $\msKKK$) $K_1$ and $K_2$
with $\#K_1>\#K_2$, then
$\trace(\omega_R)\subset R_{\geq\#K_1-\#K_2}$.
\end{lemma}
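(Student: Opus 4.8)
The plan is to turn the statement into an elementary estimate on the $(-\infty)$-coordinate of exponent vectors, by combining the monomial descriptions of $\omega_R$ and of its inverse with the defining inequalities of the sets in Definition \ref{def:un}. I treat the three cases in parallel: I write the argument for $R=\ekhsg$ using $\UUUUU^{(1)}$ and $\UUUUU^{(-1)}$, and the cases $R=\ekqsg$, $R=\ektsg$ are word-for-word the same after replacing $\UUUUU^{(\pm1)}$ by $\qUUUUU^{(\pm1)}$, $\tUUUUU^{(\pm1)}$ and, in the t-perfect case, ``maximal clique'' by ``maximal element of $\msKKK$''. First I would rewrite the trace as a product of (fractional) ideals: since $\omega_R$ contains an $R$-regular element and $R$ is a Noetherian normal domain, Fact \ref{fact:hhs1.1} together with $\omega_R^{-1}=\omega_R^{(-1)}$ gives
$$
\trace(\omega_R)=\omega_R^{-1}\omega_R=\omega_R^{(-1)}\omega_R .
$$
By Fact \ref{fact:symb power} one has $\omega_R=\bigoplus_{\eta\in\UUUUU^{(1)}}\KKK T^\eta$ and $\omega_R^{(-1)}=\bigoplus_{\zeta\in\UUUUU^{(-1)}}\KKK T^\zeta$. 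As both are spanned by Laurent monomials, their product is the $\KKK$-span of the monomials $T^\zeta T^\eta=T^{\zeta+\eta}$ with $\eta\in\UUUUU^{(1)}$, $\zeta\in\UUUUU^{(-1)}$. Since $\deg T^\mu=\mu(-\infty)$, it then suffices to prove that $\eta(-\infty)+\zeta(-\infty)\geq\#K_1-\#K_2$ for every such $\eta$ and $\zeta$.

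The estimate itself uses the clique constraints on $K_1$ and $K_2$ separately, and this is the only place the hypothesis $\#K_1>\#K_2$ enters. From $\eta\in\UUUUU^{(1)}$ and the constraint for the maximal clique $K_1$ we have $\eta^+(K_1)\leq\eta(-\infty)-1$, while $\eta(z)\geq1$ for all $z\in V$ forces $\eta^+(K_1)\geq\#K_1$; hence $\eta(-\infty)\geq\#K_1+1$. Symmetrically, from $\zeta\in\UUUUU^{(-1)}$ and the constraint for $K_2$ we have $\zeta^+(K_2)\leq\zeta(-\infty)+1$, while $\zeta(z)\geq-1$ for all $z\in V$ gives $\zeta^+(K_2)\geq-\#K_2$; hence $\zeta(-\infty)\geq-\#K_2-1$. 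Adding the two bounds yields
$$
\eta(-\infty)+\zeta(-\infty)\geq(\#K_1+1)+(-\#K_2-1)=\#K_1-\#K_2 ,
$$
so every monomial of $\trace(\omega_R)$ has degree at least $\#K_1-\#K_2$, i.e.\ $\trace(\omega_R)\subset R_{\geq\#K_1-\#K_2}$. The crucial asymmetric choice is to pair the larger clique $K_1$ with the ``$+1$'' module $\omega_R$ and the smaller clique $K_2$ with the ``$-1$'' module $\omega_R^{(-1)}$.

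I do not expect a genuine obstacle: once the trace is expressed through Facts \ref{fact:hhs1.1} and \ref{fact:symb power}, everything reduces to the one-line inequality above. The two points that deserve a little care are the reduction itself and the uniformity of the three cases. For the reduction one must justify that the ordinary product of two modules each spanned by monomials is again spanned by the pairwise products $T^{\zeta+\eta}$, which is what allows us to bound degrees monomial by monomial; this is immediate from bilinearity of multiplication on the monomial basis. For the uniformity, one must check that the same two clique (resp.\ $\msKKK$-element) constraints and the same vertex lower bounds $\mu(z)\geq n$ appear in all of $\UUUUU^{(\pm1)}$, $\qUUUUU^{(\pm1)}$, $\tUUUUU^{(\pm1)}$, so that the estimate is driven by identical inequalities in each case, with only the set of admissible cliques changing.
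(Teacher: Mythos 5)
Your proof is correct and is essentially the paper's own argument: both write $\trace(\omega_R)=\omega_R\omega_R^{-1}$, decompose an arbitrary monomial $T^\mu$ of the trace as $T^{\eta+\zeta}$ with $\eta\in\UUUUU^{(1)}$ (resp.\ $\qUUUUU^{(1)}$, $\tUUUUU^{(1)}$) and $\zeta\in\UUUUU^{(-1)}$ (resp.\ $\qUUUUU^{(-1)}$, $\tUUUUU^{(-1)}$), and obtain $\eta(-\infty)\geq\#K_1+1$ and $\zeta(-\infty)\geq-\#K_2-1$ from exactly the same clique constraints and vertex bounds, then add. The only difference is presentational: you make explicit the bilinearity/monomial-spanning step and the identification $\omega_R^{-1}=\omega_R^{(-1)}$, which the paper leaves implicit.
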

\begin{proof}
Let $T^\mu$ be an arbitrary monomial of $\trace(\omega_R)$, where
$\mu\in\ZZZ^{V^-}$.
Since $T^\mu\in\trace(\omega_R)=\omega_R\omega_R^{-1}$,
we see that there are $\eta\in\UUUUU^{(1)}$ (resp.\ $\qUUUUU^{(1)}$, $\tUUUUU^{(1)}$)
and $\zeta\in\UUUUU^{(-1)}$ (resp.\ $\qUUUUU^{(-1)}$, $\tUUUUU^{(-1)}$)
with $\mu=\eta+\zeta$.
Since $\eta\in\UUUUU^{(1)}$ (resp.\ $\qUUUUU^{(1)}$, $\tUUUUU^{(1)}$),
we see that $\eta^+(K_1)\geq \#K_1$.
Thus, $\eta(-\infty)\geq\eta^+(K_1)+1\geq\#K_1+1$.
On the other hand, since $\zeta\in \UUUUU^{(-1)}$ (resp.\ $\qUUUUU^{(-1)}$, $\tUUUUU^{(-1)}$),
we see that $\zeta^+(K_2)\geq-\#K_2$.
Thus, $\zeta(-\infty)\geq\zeta^+(K_2)-1\geq-\#K_2-1$.
Therefore,
$$
\mu(-\infty)=\eta(-\infty)+\zeta(-\infty)\geq\#K_1-\#K_2.
$$
This means that $T^\mu\in R_{\geq \#K_1-\#K_2}$.
\end{proof}

Next we state a property of a connected component which is not pure.

\begin{lemma}
\mylabel{lem:non pure connected}
Let $G=(V,E)$ be a graph and $G'$ a connected component of $G$.
If $G'$ is not pure (resp.\ t-pure), then there are maximal cliques
(resp.\ maximal elements of $\msKKK$) $K_1$ and $K_2$ in $G'$ such that
$\#K_1\neq\#K_2$ and $K_1\cap K_2\neq\emptyset$.
\end{lemma}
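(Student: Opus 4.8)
The plan is to treat the \textbf{pure} and \textbf{t-pure} cases by a single argument, since the only features used are that every edge of $G'$ is contained in a maximal object (a maximal clique in the first case, a maximal element of $\msKKK$ in the second) and that two such objects sharing a vertex intersect. The idea is a discrete intermediate-value argument: I will connect two maximal objects of \emph{different} sizes by a chain of maximal objects in which consecutive members intersect, and then observe that a change of size must occur across some adjacent pair of the chain.

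First I would use the hypothesis. If $G'$ is not pure (resp.\ not t-pure), then by definition there exist maximal cliques (resp.\ maximal elements of $\msKKK$) $L$ and $L'$ in $G'$ with $\#L\neq\#L'$. If $L\cap L'\neq\emptyset$ we are already done, so assume $L\cap L'=\emptyset$ and pick $u\in L$ and $v\in L'$. Since $G'$ is connected, there is a path $u=x_0,x_1,\ldots,x_n=v$ in $G'$ with $n\geq 1$.

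Next I would build the chain. For each $i$ with $0\leq i\leq n-1$, the edge $\{x_i,x_{i+1}\}$ is a clique (of size $2$, hence an element of $\msKKK$), so by finiteness of $G$ it is contained in a maximal clique (resp.\ a maximal element of $\msKKK$) $M_i$. I then consider the sequence $L,M_0,M_1,\ldots,M_{n-1},L'$. Consecutive members $M_{i-1}$ and $M_i$ both contain $x_i$, while $L$ and $M_0$ both contain $x_0=u$ and $M_{n-1}$ and $L'$ both contain $x_n=v$; thus every pair of consecutive members of the chain intersects. Since each $M_i$ consists of common neighbours of the edge $\{x_i,x_{i+1}\}\subseteq G'$, all members of the chain lie in $G'$. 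Relabel the chain as $L_0,L_1,\ldots,L_m$ with $L_0=L$ and $L_m=L'$.

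Finally I would conclude. Since $\#L_0=\#L\neq\#L'=\#L_m$, the sizes $\#L_j$ are not all equal, so there is an index $j$ with $\#L_j\neq\#L_{j+1}$; these two maximal objects intersect by construction, so setting $K_1=L_j$ and $K_2=L_{j+1}$ yields the desired pair with $K_1\cap K_2\neq\emptyset$. The t-pure case is verbatim the same argument with ``maximal clique'' replaced throughout by ``maximal element of $\msKKK$''. The step I expect to require the most care is the construction of the chain: one must verify that every edge of $G'$ genuinely lies in a maximal object (immediate from finiteness) and that consecutive lifted objects share a vertex, which is precisely where connectivity of $G'$ enters; the degenerate possibilities ($L\cap L'\neq\emptyset$, or a single-vertex component, which is automatically pure and t-pure) are then dispatched trivially.
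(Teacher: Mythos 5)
Your proof is correct and takes essentially the same approach as the paper's: connect representatives of two maximal objects of different sizes by a path in $G'$, lift each edge of the path to a maximal clique (resp.\ maximal element of $\msKKK$) containing it, and apply a discrete intermediate-value argument along the resulting chain of consecutively intersecting objects. The only cosmetic difference is that you fold $L$ and $L'$ into a single chain and invoke the intermediate-value step once, whereas the paper phrases the same step as a short case analysis (consecutive $L_i$'s, then $L$ versus $L_1$ or $L'$ versus $L_n$).
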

\begin{proof}
Take maximal cliques (resp.\ maximal elements of $\msKKK$) $L$ and $L'$ with
$\#L\neq\#L'$, $v\in L$ and $v'\in L'$.
Since $G'$ is connected, there are $v_1$, \ldots, $v_{n-1}\in V$ such that
$v_{i-1}$ and $v_{i}$ are adjacent for any $i=1$, \ldots, $n$, where we set
$v_0=v$ and $v_n=v'$.
Take a maximal clique (resp.\ maximal elment of $\msKKK$) $L_i$ with 
$L_i\supset\{v_{i-1}, v_i\}$ for $i=1$, \ldots, $n$.
If there is $i$ with $\#L_{i-1}\neq\#L_{i}$, then it is enough to set
$K_1=L_{i-1}$ and $K_2=L_i$, since $v_{i-1}\in L_{i-1}\cap L_i$.
If $\#L_{i-1}=\#L_i$ for any $i$, then $\#L\neq \#L_1$ or $\#L'\neq \#L_n$,
since $\#L\neq\#L'$.
If $\#L\neq\#L_1$, then it is enough to set $K_1=L$ and $K_2=L_1$, since
$v_0=v\in L\cap L_1$.
The case where $\#L'\neq \#L_n$ is proved similarly.
\end{proof}

In the next two lemmas, we state necessary conditions for 
$\ekhsg$ (resp.\ $\ekqsg$, $\ektsg$) to be \gor\ on the punctured spectrum.

\begin{lemma}
\mylabel{lem:gor pspec 1}
Let $G=(V,E)$ be a graph and set
$R=\ekhsg$ (resp.\ $\ekqsg$, $\ektsg$).
If $R$ is \gor\ on the punctured spectrum, then every connected component of
$G$ is pure (resp.\ pure, t-pure).
\end{lemma}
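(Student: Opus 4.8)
I want to show: if $R = \ekhsg$ (resp.\ $\ekqsg$, $\ektsg$) is \gor\ on the punctured spectrum, then every connected component of $G$ is pure (resp.\ pure, t-pure). The natural strategy is to prove the contrapositive, so I would assume some connected component $G'$ of $G$ fails to be pure (resp.\ t-pure) and derive that $\trace(\omega_R)$ cannot contain $\mmmm_R^n$ for any $n$, which by the remark following the definition of \gor\ on the punctured spectrum (using Fact~\ref{fact:hhs2.1}) is exactly the failure of the \gor-on-the-punctured-spectrum property.

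**The plan.** First I would invoke Lemma~\ref{lem:non pure connected}: since $G'$ is not pure (resp.\ t-pure), there are two maximal cliques (resp.\ maximal elements of $\msKKK$) $K_1$ and $K_2$ lying in $G'$ with $\#K_1 \neq \#K_2$ and $K_1 \cap K_2 \neq \emptyset$. Relabeling if necessary I may assume $\#K_1 > \#K_2$, so that $\#K_1 - \#K_2 \geq 1$. Then I would apply Lemma~\ref{lem:non pure non gor}, which gives $\trace(\omega_R) \subset R_{\geq \#K_1-\#K_2}$; in particular $\trace(\omega_R)$ contains no nonzero element of degree strictly less than $\#K_1 - \#K_2$.

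**Where the argument actually bites.** The containment $\trace(\omega_R) \subset R_{\geq \#K_1-\#K_2}$ by itself is not yet a contradiction, because $\mmmm_R^n$ also sits in high degrees for large $n$. The real point is that \gor\ on the punctured spectrum is equivalent, via Fact~\ref{fact:hhs2.1}, to $\trace(\omega_R) \supset \mmmm_R^n$ for $n \gg 0$, i.e.\ $\trace(\omega_R)$ is $\mmmm_R$-primary. So I need to produce, in \emph{arbitrarily high} degree, elements of $R$ that are forced to lie outside $\trace(\omega_R)$, contradicting primariness. The cleanest way is a degree/grading obstruction on the last coordinate $-\infty$: the grading of $R$ is by the value at $-\infty$, but the polytope $\hstab(G)$ (resp.\ $\qstab(G)$, $\tstab(G)$) has vertices/faces whose defining inequalities decouple across the connected components of $G$. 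Because $K_1, K_2$ both lie in the single component $G'$ and the two constraints have different "sizes," I can build, for each large degree $N$, a monomial $T^\mu \in R_N$ supported on $G'$ that violates the degree bound $\mu(-\infty) \geq \#K_1 - \#K_2$ relative to its other data. Concretely, I expect to take a monomial concentrated near the $K_2$-facet so that its $-\infty$-degree is small compared with what membership in $\trace(\omega_R)$ would require, and then scale it up by multiplying by vertex indeterminates of the other components (which live in degree $0$ at $-\infty$) to reach degree $N$ while staying outside $R_{\geq \#K_1-\#K_2}$ in the relevant component's ``local'' degree.

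**Anticipated obstacle and how I would handle it.** The subtle step is making the phrase "outside $\trace(\omega_R)$ in arbitrarily high degree" precise, since $R_N$ itself is a single graded piece and $\trace(\omega_R)$ is a homogeneous ideal, so I cannot simply point to low degrees. I expect the right device is to use that the component $G'$ contributes an independent block to the semigroup defining $R$, so that there is a monomial $T^\mu$ with $\mu$ supported entirely on $V(G') \cup \{-\infty\}$ which fails the inequality $\mu(-\infty) \geq \#K_1 - \#K_2$, and then to argue that $T^\mu \notin \trace(\omega_R)$ by the explicit description of $\UUUUU^{(1)}$ and $\UUUUU^{(-1)}$ (via Fact~\ref{fact:symb power} and $\trace(\omega_R) = \omega_R \omega_R^{-1}$), exactly mirroring the computation in Lemma~\ref{lem:non pure non gor}. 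Since such a $\mu$ exists for every sufficiently large value of $\mu(-\infty)$ by padding with other vertices of $G'$, this shows $\trace(\omega_R)$ omits a nonzero monomial in infinitely many degrees and hence is not $\mmmm_R$-primary. I would then conclude by Fact~\ref{fact:hhs2.1} that $R$ is \emph{not} \gor\ on the punctured spectrum, establishing the contrapositive.
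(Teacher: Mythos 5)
Your skeleton matches the paper's proof in outline: use Lemma \ref{lem:non pure connected} to get intersecting maximal cliques $K_1,K_2$ in one component with $\#K_1>\#K_2$, then exploit $\trace(\omega_R)=\omega_R\omega_R^{-1}$, Fact \ref{fact:symb power} and the clique inequalities; and you correctly recognize that Lemma \ref{lem:non pure non gor} by itself proves nothing here. But the actual core of the argument --- producing, for \emph{every} $n$, a monomial of degree $\geq n$ outside $\trace(\omega_R)$ --- is missing, and the devices you propose in its place do not work. First, your plan is internally inconsistent: you ask for monomials $T^\mu\in R_N$ with $N$ large that "violate the degree bound $\mu(-\infty)\geq\#K_1-\#K_2$", but $\mu(-\infty)$ \emph{is} the degree, so no monomial of degree $N\geq\#K_1-\#K_2$ can violate it. Second, "multiplying by vertex indeterminates of the other components (which live in degree $0$ at $-\infty$)" is not available: the $T_x$ with $x\in V$ are not elements of $R$ (the degree-zero part of $R$ is $\KKK$), and multiplying by degree-zero elements could not raise the degree in any case. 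Third, padding the monomial with mass away from $K_1\cap K_2$ genuinely fails rather than being a technicality: if $\mu\equiv 0$ on $K_1\cup K_2$, then for any decomposition $\mu=\eta+\zeta$ with $\eta\in\UUUUU^{(1)}$, $\zeta\in\UUUUU^{(-1)}$, Lemma \ref{lem:triv} forces $\eta\equiv 1$ and $\zeta\equiv -1$ on $K_1\cup K_2$, and the two clique inequalities then give only $\mu(-\infty)\geq\#K_1-\#K_2$, which is satisfied once the degree is large --- no contradiction.

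The missing idea, and the entire reason Lemma \ref{lem:non pure connected} supplies $K_1\cap K_2\neq\emptyset$, is to concentrate the monomial at a common vertex $p\in K_1\cap K_2$. Assuming $\trace(\omega_R)\supset R_{\geq n}$ (which follows from the punctured-spectrum hypothesis and finite generation), set $\mu(p)=\mu(-\infty)=n$ and $\mu=0$ elsewhere; this $\mu$ lies in $\UUUUU^{(0)}$ (resp.\ $\qUUUUU^{(0)}$, $\tUUUUU^{(0)}$), so $T^\mu\in R_n\subset\trace(\omega_R)$. Writing $\mu=\eta+\zeta$ with $\eta\in\UUUUU^{(1)}$, $\zeta\in\UUUUU^{(-1)}$, Lemma \ref{lem:triv} gives $\eta\equiv 1$, $\zeta\equiv -1$ on $(K_1\cup K_2)\setminus\{p\}$, hence $\eta(-\infty)\geq\eta^+(K_1)+1=\#K_1+\eta(p)$ and $\zeta(-\infty)\geq\zeta^+(K_2)-1=-\#K_2+\zeta(p)$. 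Since $\eta(p)+\zeta(p)=\mu(p)=n$, summing yields $n=\mu(-\infty)\geq\#K_1-\#K_2+n>n$, a contradiction. The essential feature is that the mass placed at $p$ enters \emph{both} clique inequalities, so the forced lower bound on the degree is $\#K_1-\#K_2$ \emph{plus} $\mu(p)$, i.e.\ the obstruction scales with the degree; this is precisely what any padding away from $K_1\cap K_2$ destroys, and it is the step your proposal does not supply.
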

\begin{proof}
Assume the contrary and suppose that there is a connected component $G'$ of $G$ 
which is not pure (resp.\ pure, t-pure).
Since $R$ is \gor\ on the punctured spectrum, we can take a positive integer $n$
with $\trace(\omega_R)\supset R_{\geq n}$.

By Lemma \ref{lem:non pure connected}, we see that there are maximal cliques
(resp.\ maximal cliques, maximal elements of $\msKKK$) $K_1$ and $K_2$ in $G'$
with $\#K_1\neq \#K_2$ and $K_1\cap K_2\neq \emptyset$.
We may assume that $\#K_1>\#K_2$.
Take $p\in K_1\cap K_2$ and define $\mu\in\ZZZ^{V^-}$ by
$$
\mu(x)=
\left\{
\begin{array}{ll}
n&\qquad\mbox{$x=p$, $-\infty$},\\
0&\qquad\mbox{otherwise}.
\end{array}
\right.
$$
Then $\mu\in\UUUUU^{(0)}$ (resp.\ $\qUUUUU^{(0)}$, $\tUUUUU^{(0)}$)
and therefore $T^\mu\in R$.
Further, $T^\mu\in R_{\geq n}$ since $\mu(-\infty)=n$.

Since $T^\mu\in R_{\geq n}\subset\trace(\omega_R)=\omega_R\omega_R^{(-1)}$,
we see that there are $\eta\in\UUUUU^{(1)}$ (resp.\ $\qUUUUU^{(1)}$, $\tUUUUU^{(1)}$)
and $\zeta\in\UUUUU^{(-1)}$ (resp.\ $\qUUUUU^{(-1)}$, $\tUUUUU^{(-1)}$)
with $\mu=\eta+\zeta$.
By Lemma \ref{lem:triv}, we see that $\eta(x)=1$ and $\zeta(x)=-1$ for any 
$x\in (K_1\cup K_2)\setminus\{p\}$.
Therefore,
$\eta^+(K_1)=\#K_1+\eta(p)-1$
and
$\zeta^+(K_2)=-\#K_2+\zeta(p)+1$.
Thus,
$\eta(-\infty)\geq\eta^+(K_1)+1=\#K_1+\eta(p)$
and
$\zeta(-\infty)\geq\zeta^+(K_2)-1=-\#K_2+\zeta(p)$.

Since $\eta(p)+\zeta(p)=\mu(p)=n$, we see that
$\eta(-\infty)+\zeta(-\infty)\geq \#K_1-\#K_2+n>n$.
This contradicts to the fact that
$\eta(-\infty)+\zeta(-\infty)=\mu(-\infty)=n$.
\end{proof}
\begin{remark}
\rm
\mylabel{rem:hs gap}
Hibi and Stamate \cite[Lemma 11]{hs} asserted that they proved this result
in the case where $G$ is a perfect graph and $\eksg$ is \gor\ on the punctured spectrum.
\cite[Lemma 11]{hs} is used in the
proof of \cite[Theorem 13]{hs}, the main theorem of the latter half of their paper.
However they used in the proof the following assertion which has a counter example:
let $G'$ be a connected component of a graph $G$ which is not pure.
Then there is an edge $\{i_0,j_0\}$ in $G'$ such that $i_0$ is contained in a clique
with size $\omega(G')$ and there is no clique containing $j_0$ with size $\omega(G')$.

Consider the following graph $G$.
\begin{center}
\begin{tikzpicture}

\coordinate(X) at (0,2);
\coordinate(Y) at (0,0);
\coordinate(Z) at (2,1);
\coordinate(W) at (4,1);
\coordinate(U) at (6,2);
\coordinate(V) at (6,0);

\draw (Z)--(X)--(Y)--(Z)--(W)--(U)--(V)--(W);

\draw[fill] (X) circle [radius=0.1];
\draw[fill] (Y) circle [radius=0.1];
\draw[fill] (Z) circle [radius=0.1];
\draw[fill] (W) circle [radius=0.1];
\draw[fill] (U) circle [radius=0.1];
\draw[fill] (V) circle [radius=0.1];

\end{tikzpicture}
\end{center}
$G$ is connected, not pure and all the vertices are contained in a clique
with size $3=\omega(G)$.
Thus, there is no edge $\{i_0, j_0\}$ satisfying the above condition.
\end{remark}

\begin{lemma}
\mylabel{lem:gor pspec 2}
Let $G=(V,E)$ be a graph and set
$R=\ekhsg$ (resp.\ $\ektsg$).
If $R$ is \gor\ on the punctured spectrum, then for every connected component 
$G'$ of $G$, there is no odd cycle without chord and length longer than 3 in
$G'$ or the size of every maximal clique in $G'$ is 2 and there is no odd 
cycle without chord and length longer than 5.
\end{lemma}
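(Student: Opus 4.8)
The plan is to argue by contradiction, exhibiting a monomial of arbitrarily high degree that lies in $R$ but not in $\trace(\omega_R)$. Assume $R$ is \gor\ on the punctured spectrum, so $R_{\geq n}\subseteq\trace(\omega_R)=\omega_R\omega_R^{(-1)}$ for some $n>0$. By Lemma~\ref{lem:gor pspec 1} every connected component of $G$ is pure (resp.\ t-pure); thus each component $G'$ has a common maximal-clique size, which I write $\omega(G')$. Suppose some $G'$ violates the conclusion. Reading ``every maximal clique has size $2$'' as ``$\omega(G')=2$'' via purity, the failure of the conclusion amounts to the existence in $G'$ of a chordless odd cycle $C=(v_0,\ldots,v_{2k})$ of length $2k+1\geq 5$ with $k(\omega(G')-1)>2$: for $\omega(G')\geq 3$ any chordless odd cycle of length $\geq 5$ works ($k\geq 2$), for $\omega(G')=2$ one needs length $\geq 7$ ($k\geq 3$), and $\omega(G')=1$ leaves $G'$ edgeless, hence cycle-free.

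The witness I would use is supported on a maximum independent set of $C$. Put $I=\{v_1,v_3,\ldots,v_{2k-1}\}$ (which is independent since $C$ is chordless), fix any $N\geq n$, and define $\mu\in\ZZZ^{V^-}$ by $\mu(v)=N$ for $v\in I$, $\mu(-\infty)=N$, and $\mu(x)=0$ otherwise. First I would check $\mu\in\UUUUU^{(0)}$, so that $T^\mu\in R_{\geq n}$: nonnegativity is clear, and since $I$ meets every clique in at most one vertex and every odd cycle $C'$ in an independent subset (hence in at most $\frac{\#C'-1}{2}$ vertices), all clique and odd-cycle inequalities hold; in particular $\mu^+(C)=kN=\mu(-\infty)\frac{\#C-1}{2}$ is saturated. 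As $T^\mu\in R_{\geq n}\subseteq\trace(\omega_R)$, we could write $\mu=\eta+\zeta$ with $\eta\in\UUUUU^{(1)}$ and $\zeta\in\UUUUU^{(-1)}$, and the goal is to contradict this.

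The core of the argument is a clash of two inequalities once the killed vertices are pinned. Each $v\in C\setminus I$ has $\mu(v)=0$, so $\eta(v)=1$ and $\zeta(v)=-1$ by Lemma~\ref{lem:triv}; there are $k+1$ such vertices. For each $v_i\in I$ pick a maximal clique $K_i\ni v_i$ of size $\omega(G')$; since $I$ is independent, $K_i\cap I=\{v_i\}$, so the other $\omega(G')-1$ vertices of $K_i$ carry $\eta$-value $1$, and $\eta^+(K_i)\leq\eta(-\infty)-1$ gives $\eta(v_i)\leq\eta(-\infty)-\omega(G')$; summing over $I$ yields $\sum_{v\in I}\eta(v)\leq k\eta(-\infty)-k\omega(G')$. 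On the other hand, applying the odd-cycle inequality for $\zeta$ to $C$ and substituting $\zeta(v)=-1$ off $I$, $\zeta(v)=N-\eta(v)$ on $I$, and $\zeta(-\infty)=N-\eta(-\infty)$, the terms in $N$ cancel and one gets $\sum_{v\in I}\eta(v)\geq k\eta(-\infty)-(k+2)$. The two bounds force $k\omega(G')\leq k+2$, i.e.\ $k(\omega(G')-1)\leq 2$, contradicting the choice of $C$. Hence no component violates the conclusion. The case $R=\ektsg$ follows from the identical computation, with $\UUUUU$, maximal cliques and $\omega(G')$ replaced throughout by $\tUUUUU$, maximal elements of $\msKKK$ and their common size.

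The main difficulty is selecting the witness correctly. The naive candidate---a monomial saturating only the cycle inequality, e.g.\ one constant on all of $C$---does lie in $R$, but for large degree it splits as $\eta+\zeta$, so it detects only the failure of the global \gor\ property, not of the punctured-spectrum property. The decisive point is to place weight solely on a maximum independent set of $C$: this kills $k+1$ cycle vertices whose $\eta$- and $\zeta$-values become rigid through Lemma~\ref{lem:triv}, so that the clique inequalities (bounding the surviving values from above) collide with the odd-cycle inequality for $\zeta$ (forcing their sum up) by an amount independent of the degree $N$. The two steps that need care are verifying $K_i\cap I=\{v_i\}$ and confirming that the $N$-dependence cancels, leaving the degree-free inequality $k(\omega(G')-1)\leq 2$.
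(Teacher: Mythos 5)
Your proof is correct and takes essentially the same route as the paper: the same witness monomial supported on the odd-indexed vertices of the chordless cycle, the same rigidity from Lemma \ref{lem:triv}, and the same clash between the clique inequalities for $\eta$ and the odd-cycle inequality for $\zeta$, ending in the same degree-free bound $k(\omega(G')-1)\leq 2$ (the paper's $\ell(m-1)\leq 2$). The only cosmetic differences are that the paper upgrades the cycle inequality to an equality by saturation and uses a single clique at the vertex maximizing $\eta$, whereas you sum clique inequalities over all odd-indexed vertices.
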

\begin{proof}
Suppose that there is an odd cycle $C$  without chord and length longer than 3 in $G'$.
We show that the length of $C$ is 5 and every maximal clique in $G'$ has size 2.

Set $C=\{v_0, v_1, \ldots, v_{2\ell}\}$, where $\ell$ is an integer with $\ell\geq 2$,
$\{v_{i-1},v_i\}\in E$ for $1\leq i\leq 2\ell$ and $\{v_{2\ell},v_0\}\in E$.
Since $R$ is \gor\ on the punctured spectrum, there is a positive integer $n$ 
with $\trace(\omega_R)\supset R_{\geq n}$.
Define $\mu\in\ZZZ^{V^-}$ by
$$
\mu(x)=
\left\{
\begin{array}{ll}
n&\qquad\mbox{$x\in\{v_1, v_3, \ldots, v_{2\ell-1}, -\infty\}$},\\
0&\qquad\mbox{otherwise}.
\end{array}
\right.
$$
Then $\mu\in\UUUUU^{(0)}$ (resp.\ $\tUUUUU^{(0)}$), since $C$ does not have a chord.
Let $m$ be the maximum size of cliques in $G'$ (resp.\ elements of $\msKKK(G')$).
Note that every maximal clique in $G'$ (resp.\ maximal element of $\msKKK(G')$) has
size $m$ by Lemma \ref{lem:gor pspec 1}.
Since $T^\mu\in R_{\geq n}\subset\trace(\omega_R)$, we see that there are
$\eta\in\UUUUU^{(1)}$ (resp.\ $\tUUUUU^{(1)}$) and $\zeta\in \UUUUU^{(-1)}$
(resp.\ $\tUUUUU^{(-1)}$) with $\mu=\eta+\zeta$.

Set $a_i=\eta(v_{2i-1})$ for $1\leq i\leq \ell$ and $a=\max\{a_1, \ldots, a_\ell\}$.
Since
\begin{eqnarray*}
\mu^+(C)&=&\eta^+(C)+\zeta^+(C),\\
\mu(-\infty)&=&\eta(-\infty)+\zeta(-\infty),\\
\mu^+(C)&=&n\ell=\ell\mu(-\infty),\\
\eta^+(C)+1&\leq&\ell\eta(-\infty)\\
\mbox{and}\hskip 1in\\
\zeta^+(C)-1&\leq&\ell\zeta(-\infty),
\end{eqnarray*}
we see that
$$
\eta^+(C)+1=\ell\eta(-\infty).
$$

Since $\eta(x)=1$ for any $x\in V\setminus\{v_1, v_3, \ldots, v_{2\ell-1}\}$ by
Lemma \ref{lem:triv}, we see that
$$
\eta^+(C)=\sum_{i=1}^\ell a_i+\ell+1.
$$
Thus, 
$$
\ell\eta(-\infty)=\sum_{i=1}^\ell a_i+\ell+2.
$$
Take $i$ with $a_i=a$ and a maximal clique in $G'$ (resp. maximal element of $\msKKK(G')$)
$K$ with $v_{2i-1}\in K$.
Then $\eta(x)=1$ for any $x\in K\setminus\{v_{2i-1}\}$ since $C$ does not have a chord.
Thus
$$
\eta^+(K)=a+m-1
$$
and we see that
$$
a+m=\eta^+(K)+1\leq\eta(-\infty).
$$
Therefore,
\begin{eqnarray*}
\ell\eta(-\infty)&=&\sum_{i=1}^\ell a_i+\ell+2\\
&\leq&\ell a+\ell+2\\
&=&\ell(a+m)-\ell(m-1)+2\\
&\leq&\ell\eta(-\infty)-\ell(m-1)+2.
\end{eqnarray*}
Thus,
$$
\ell(m-1)\leq 2.
$$
Since $m\geq 2$ and $\ell\geq 2$, we see that $\ell=m=2$, i.e., 
the length of $C$ is 5 and the size of maximal cliques in $G'$ is 2.
\end{proof}

In the rest of this paper, we use the following notation.
Let $G=(V,E)$ be a graph,
$G^{(1)}$, \ldots, $G^{(\ell)}$ be the connected components of $G$ and set
$J_d\define\{j\mid \omega(G^{(j)})=d\}$,
$J'_d\define\{j\mid$ the maximum size of elements in $\msKKK(G^{(j)})$ is $d\}$,
 for any positive integer $d$,
$I\define\{d\mid J_d\neq\emptyset\}=\{d_1,\ldots, d_u\}$, $d_1<\cdots<d_u$ and
$I'\define\{d\mid J'_d\neq\emptyset\}=\{d'_1,\ldots, d'_{u'}\}$, $d'_1<\cdots<d'_{u'}$.
Further, for any $d\in I$ (resp.\ $d\in I'$), we set
$V_d\define\bigcup_{j\in J_d}V(G^{(j)})$
(resp.\ $V'_d\define\bigcup_{j\in J'_d}V(G^{(j)})$)
and $G_d$ (resp.\ $G'_d$) the induced subgraph of $G$ by $V_d$ (resp. $V'_d$).
Note that $V'_1=V_1$, $V'_2=V_2$ and $V'_3=\bigcup_{d\geq 3}V_d$.

By Fact \ref{fact:gor cri}, Lemmas \ref{lem:gor pspec 1} and \ref{lem:gor pspec 2},
we see the following.

\begin{prop}
\mylabel{prop:gor pspec}
If $\ekhsg$ (resp.\ $\ekqsg$, $\ektsg$) is \gor\ on the punctured spectrum,
then 
$E_\KKK[\hstab(G_d)]$ (resp.\
${E_\KKK[\qstab(G_d)]}$,
${E_\KKK[\tstab(G'_d)]}$)
is \gor\ for any $d\in I$ (resp. $d\in I$, $d\in I'$).
\end{prop}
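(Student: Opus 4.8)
The plan is to apply the Gorenstein criterion of Fact~\ref{fact:gor cri} to each of the subgraphs $G_d$ (resp.\ $G'_d$), after first extracting from Lemmas~\ref{lem:gor pspec 1} and~\ref{lem:gor pspec 2} the structural information they give about the individual connected components of $G$. The first step is to record that $G_d$ (resp.\ $G'_d$) is, by its very definition, the disjoint union of those connected components $G^{(j)}$ of $G$ with $\omega(G^{(j)})=d$ (resp.\ with maximum $\msKKK(G^{(j)})$-size equal to $d$); in particular its connected components are whole connected components of $G$. By Lemma~\ref{lem:gor pspec 1} each such $G^{(j)}$ is pure (resp.\ t-pure), so all its maximal cliques (resp.\ maximal elements of $\msKKK$) have the common size $d$; hence $G_d$ is pure with $\omega(G_d)=d$ (resp.\ $G'_d$ is t-pure, its maximal $\msKKK$-elements all of size $d$). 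For the $\qstab$ case this already finishes the argument, since by Fact~\ref{fact:gor cri}(3) purity of $G_d$ is equivalent to $E_\KKK[\qstab(G_d)]$ being Gorenstein.

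For the $\hstab$ and $\tstab$ cases it remains to verify the chordless-odd-cycle conditions in Fact~\ref{fact:gor cri}(1) (resp.\ (2)). The key observation is that a cycle is connected and therefore lies entirely inside one connected component, and since $G_d$ (resp.\ $G'_d$) is assembled from whole components of $G$ as an induced subgraph, the chordless odd cycles of $G_d$ (resp.\ $G'_d$) are precisely the chordless odd cycles occurring in the components $G^{(j)}$ it comprises. Thus it suffices to feed each such component into the dichotomy of Lemma~\ref{lem:gor pspec 2}, which asserts that in $G^{(j)}$ either (A) there is no chordless odd cycle of length at least $5$, or (B) every maximal clique (resp.\ maximal $\msKKK$-element) has size $2$ and there is no chordless odd cycle of length at least $7$.

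The argument is then a case analysis on $d$. If $d=1$ the component is a single vertex, so $G_d$ (resp.\ $G'_d$) is edgeless and the $\omega=1$ clause of Fact~\ref{fact:gor cri}(1) (resp.\ the $E=\emptyset$ clause of Fact~\ref{fact:gor cri}(2)) applies. If $d=2$, then for each component alternative (A) yields, a fortiori, no chordless odd cycle of length at least $7$, and alternative (B) yields the same directly; combined with $\omega(G_d)=2$ (resp.\ with the fact that each component of $G'_2$ bears an edge, is triangle-free, and, being t-pure, has no isolated vertex) this matches the $\omega(G)=2$ clause of Fact~\ref{fact:gor cri}(1) (resp.\ the no-isolated-vertex-nor-triangle clause of Fact~\ref{fact:gor cri}(2)). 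If $d\geq 3$ (for $\tstab$ this is the value $d=3$), the common clique size is at least $3$, so alternative (B) of Lemma~\ref{lem:gor pspec 2} is impossible and alternative (A) must hold: there is no chordless odd cycle of length at least $5$. Together with the observation that t-purity at $\msKKK$-level $3$ forbids maximal cliques of size $1$ or $2$, so that every maximal clique of $G'_3$ has size at least $3$, this is exactly the $\omega(G)\geq 3$ clause of Fact~\ref{fact:gor cri}(1) (resp.\ the all-maximal-cliques-of-size-at-least-$3$ clause of Fact~\ref{fact:gor cri}(2)).

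I expect no serious obstacle: the genuine work has already been done in Lemmas~\ref{lem:gor pspec 1} and~\ref{lem:gor pspec 2}, and this proposition is essentially a bookkeeping step threading their conclusions through the trichotomy of Fact~\ref{fact:gor cri}. The only points demanding care are (i) confirming that passing from a component's maximal $\msKKK$-element size to the maximal-clique-size hypotheses of Fact~\ref{fact:gor cri}(2) is legitimate---namely that t-purity rules out maximal cliques of size $1$ or $2$ once the maximal $\msKKK$-size is $3$---and (ii) keeping the cycle-length thresholds $5$ versus $7$ correctly aligned with the clique-size cases, so that the weaker branch (B) of Lemma~\ref{lem:gor pspec 2} is invoked only when $d=2$.
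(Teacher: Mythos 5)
Your proposal is correct and follows exactly the route the paper intends: the paper offers no written-out proof of Proposition~\ref{prop:gor pspec}, merely citing Fact~\ref{fact:gor cri} together with Lemmas~\ref{lem:gor pspec 1} and~\ref{lem:gor pspec 2}, and your argument is precisely the bookkeeping that citation leaves implicit. Your two flagged points of care (ruling out maximal cliques of size $1$ or $2$ in $G'_3$ via t-purity, and aligning the length-$5$ versus length-$7$ thresholds with the clique-size cases) are handled correctly.
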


Now we consider h-perfect, perfect and t-perfect graphs.
If $G$ is perfect, then for every $d\in I$,
$E_\KKK[\qstab(G_d)]=E_\KKK[\stab(G_d)]$ is standard graded by 
\cite[Theorem 1.1]{oh}.
Further, if $G$ is h-perfect (resp. t-perfect) and $\eksg$ is \gor\ on the punctured spectrum,
then 
$E_\KKK[\stab(G_d)]$
is standard graded if $d\neq 2$ by \cite[Theorem 1.1]{oh}, 
Lemmas \ref{lem:gor pspec 1} and \ref{lem:gor pspec 2}.
Note that if $G$ is t-perfect, then there is no clique with size 4.
Thus $I=I'\subset\{1,2,3\}$ and $G'_d=G_d$ for any $d\in I$.

Now we state the following.

\begin{thm}
\mylabel{thm:gor pspec}
Let $G$ be an h-perfect graph.
Then $\eksg$ is \gor\ on the punctured spectrum if and only if
$E_\KKK[\stab(G_d)]$ is \gor\ for any $d\in I$
\end{thm}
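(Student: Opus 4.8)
The plan is to assemble the statement from the two principal results already proved: Proposition \ref{prop:gor pspec} supplies the ``only if'' implication, and Theorem \ref{thm:gor pspec general} supplies the ``if'' implication. The first thing I would record is the standing identification coming from h-perfectness. Since $G$ is h-perfect, $\stab(G)=\hstab(G)$, so $\eksg=E_\KKK[\hstab(G)]$; and since each $G_d$ is a union of connected components of $G$, it too is h-perfect, whence $E_\KKK[\stab(G_d)]=E_\KKK[\hstab(G_d)]$. So the assertion to be proved is precisely that $E_\KKK[\hstab(G)]$ is \gor\ on the punctured spectrum if and only if $E_\KKK[\hstab(G_d)]$ is \gor\ for every $d\in I$. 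Granting this identification, the ``only if'' direction is immediate: if $E_\KKK[\hstab(G)]$ is \gor\ on the punctured spectrum, then Proposition \ref{prop:gor pspec} (the $\hstab$ case) gives that $E_\KKK[\hstab(G_d)]=E_\KKK[\stab(G_d)]$ is \gor\ for every $d\in I$.

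For the ``if'' direction I would first set up the Segre structure. Because the vertex sets $V_{d_1},\ldots,V_{d_u}$ are pairwise disjoint and exhaust $V$ and no edges run between distinct $G_{d_i}$, the graph $G$ is the disjoint union of $G_{d_1},\ldots,G_{d_u}$, so $\stab(G)=\stab(G_{d_1})\times\cdots\times\stab(G_{d_u})$ and hence $\eksg=E_\KKK[\stab(G_{d_1})]\#\cdots\#E_\KKK[\stab(G_{d_u})]$ by the product-to-Segre formula recorded in Section \ref{sec:pre}. I would then check the hypotheses of Theorem \ref{thm:gor pspec general} for the factors $R^{(i)}=E_\KKK[\stab(G_{d_i})]$: each is \cm\ and normal by \cite{hoc}; each is \gor\ by assumption, hence trivially \gor\ on the punctured spectrum; $\dim R^{(i)}=\#V(G_{d_i})+1\geq 2$ since every $G_{d_i}$ is nonempty; $a(R^{(i)})=-d_i-1<0$ by Lemma \ref{lem:a-inv}; and $T_{-\infty}$ is a linear $R^{(i)}$-regular element, being a nonzero degree-one element of the domain $R^{(i)}$.

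The remaining hypothesis, and the one I expect to be the only real obstacle, is that at most one factor fails to be standard graded. Here I would use that each $R^{(i)}$ is \gor: by Fact \ref{fact:gor cri} this forces $G_{d_i}$ to be pure and to avoid long chordless odd cycles, and these structural consequences, together with h-perfectness, let \cite[Theorem 1.1]{oh} conclude that $E_\KKK[\stab(G_{d_i})]$ is standard graded whenever $d_i\neq 2$. Since the clique numbers $d_1<\cdots<d_u$ are pairwise distinct, at most one index $i$ satisfies $d_i=2$, so at most one factor can fail to be standard graded. With every hypothesis verified, Theorem \ref{thm:gor pspec general} yields that $\eksg$ is \gor\ on the punctured spectrum. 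The degenerate case $u=1$ is subsumed, since then $\eksg=E_\KKK[\stab(G_{d_1})]$ is \gor\ and the $t=1$ instance of Theorem \ref{thm:gor pspec general} is trivial. This completes the plan.
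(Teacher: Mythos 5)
Your proposal is correct and takes essentially the same route as the paper: the ``only if'' direction is Proposition \ref{prop:gor pspec} (via the identification $\eksg=E_\KKK[\hstab(G)]$ and h-perfectness of each $G_d$), and the ``if'' direction is Theorem \ref{thm:gor pspec general} applied to the Segre decomposition $\eksg=E_\KKK[\stab(G_{d_1})]\#\cdots\#E_\KKK[\stab(G_{d_u})]$, with standard gradedness of the factors with $d_i\neq 2$ supplied by \cite[Theorem 1.1]{oh}. Your explicit verification of the hypotheses of Theorem \ref{thm:gor pspec general} (dimension, $a$-invariant via Lemma \ref{lem:a-inv}, the linear non-zerodivisor $T_{-\infty}$, and at most one non-standard-graded factor), and your use of Fact \ref{fact:gor cri} to get the structural conditions needed for \cite{oh} from Gorensteinness of the factors, is precisely what the paper's two-line proof leaves implicit.
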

\begin{proof}
The ``if'' part follows from Theorem \ref{thm:gor pspec general}
and the above mentioned fact.
Further the ``only if'' part follows from Proposition \ref{prop:gor pspec}.
\end{proof}
Since perfect or t-perfect graphs are h-perfect, we see the following.

\begin{cor}
\mylabel{cor:gor pspec perfect}
Let $G$ be a perfect or t-perfect graph.
Then  $\eksg$ is \gor\ on the punctured spectrum if and  only if
$E_\KKK[\stab(G_d)]$ is \gor\ for any $d\in I$.
\end{cor}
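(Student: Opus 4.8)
The plan is to deduce this as an immediate specialization of Theorem \ref{thm:gor pspec}, which already establishes the stated equivalence for the strictly larger class of h-perfect graphs. Accordingly, the only point that genuinely needs to be addressed is that every perfect graph and every t-perfect graph is h-perfect; once that inclusion of graph classes is in hand, the two sides of the corollary are literally the two sides of Theorem \ref{thm:gor pspec} applied to $G$, and there is nothing further to do.

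First I would recall the containment $\stab(G)\subset\hstab(G)=\qstab(G)\cap\tstab(G)$ recorded in Section \ref{sec:pre}. If $G$ is perfect, then $\stab(G)=\qstab(G)$ by \cite[Theorem 3.1]{chv}, so that $\hstab(G)=\qstab(G)\cap\tstab(G)\subset\qstab(G)=\stab(G)\subset\hstab(G)$, forcing $\stab(G)=\hstab(G)$; hence $G$ is h-perfect. Symmetrically, if $G$ is t-perfect then $\stab(G)=\tstab(G)$, and the same sandwiching $\hstab(G)\subset\tstab(G)=\stab(G)\subset\hstab(G)$ gives $\stab(G)=\hstab(G)$, so again $G$ is h-perfect. (In fact this is exactly the observation already made in Section \ref{sec:pre}, so in the write-up I would simply cite it rather than repeat the chain of inclusions.)

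Having verified h-perfection, I would then invoke Theorem \ref{thm:gor pspec} verbatim: $\eksg$ is \gor\ on the punctured spectrum if and only if $E_\KKK[\stab(G_d)]$ is \gor\ for every $d\in I$. Since no hypothesis beyond h-perfection enters that theorem, the corollary follows at once. I do not expect any real obstacle here: all of the substantive work — the stability of the \gor\ on the punctured spectrum property under Segre products in Theorem \ref{thm:gor pspec general}, the necessity direction packaged in Proposition \ref{prop:gor pspec}, and the standard-gradedness input from \cite[Theorem 1.1]{oh} used inside the proof of Theorem \ref{thm:gor pspec} — has already been carried out for the wider class. The corollary is therefore only a restatement of that theorem for the two familiar subclasses, and the lone thing to check, the reduction to h-perfection, is precisely the elementary polytope inclusion noted above.
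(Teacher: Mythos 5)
Your proposal is correct and matches the paper's own argument exactly: the paper derives this corollary in one line from Theorem \ref{thm:gor pspec}, using the fact (already recorded in Section \ref{sec:pre} via $\stab(G)\subset\hstab(G)=\qstab(G)\cap\tstab(G)$ and \cite[Theorem 3.1]{chv}) that perfect and t-perfect graphs are h-perfect. Your explicit sandwiching argument for that inclusion is just an expansion of what the paper cites, so there is nothing to add or correct.
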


Now we state a criterion of nearly \gor\ property of h-perfect (resp.\ perfect, t-perfect)
graphs.
Note that if $G$ is h-perfect (resp.\ perfect, t-perfect) and $\eksg$ is \gor, then
$u=1$ by Fact \ref{fact:gor cri}.

\begin{thm}
\mylabel{thm:nearly gor}
Let $G$ be an h-perfect graph.
Set $R=\eksg$.
Then $R$ is not \gor\ and nearly \gor\ if and only if $u=2$, $d_2-d_1=1$ and
$E_\KKK[\stab(G_{d_i})]$ is \gor\ for $i=1$ and $2$.
\end{thm}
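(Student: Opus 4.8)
The plan is to work entirely at the level of exponent vectors. Using Fact~\ref{fact:hhs1.1} together with the normality of Ehrhart rings I identify $\trace(\omega_R)=\omega_R^{(-1)}\omega_R$, and by Fact~\ref{fact:symb power} (with $n=\pm1$) this is the monomial ideal spanned by $T^\mu$ with $\mu\in\UUUUU^{(1)}(G)+\UUUUU^{(-1)}(G)$, since $\UUUUU^{(0)}(G)+\UUUUU^{(1)}(G)\subset\UUUUU^{(1)}(G)$. Since $G$ is h-perfect, $R=\eksg=\ekhsg$, and as the clique and odd-cycle inequalities defining $\UUUUU^{(n)}$ are local to the connected components while sharing the common value $\mu(-\infty)$, every one of these conditions splits over the decomposition $G=G_{d_1}\sqcup\cdots\sqcup G_{d_u}$. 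Near-Gorensteinness then becomes the combinatorial statement that every $\mu\in\UUUUU^{(0)}(G)$ with $\mu(-\infty)\geq1$ lies in $\UUUUU^{(1)}(G)+\UUUUU^{(-1)}(G)$.

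For the ``only if'' direction, suppose $R$ is nearly \gor\ but not \gor. Nearly \gor\ gives $\mmmm_R\subset\trace(\omega_R)$, so $R$ is in particular \gor\ on the punctured spectrum, and by Theorem~\ref{thm:gor pspec} each $E_\KKK[\stab(G_d)]$, $d\in I$, is \gor. If $u=1$ then $G=G_{d_1}$, so $R=E_\KKK[\stab(G_{d_1})]$ would be \gor, contradicting our assumption; hence $u\geq2$. To bound the spread of the clique numbers, note that if $d_u-d_1\geq2$ I may pick a maximum clique $K_1$ of $G_{d_u}$ (size $d_u$) and any maximal clique $K_2$ of $G_{d_1}$ (size at most $d_1$), so that $\#K_1-\#K_2\geq2$; Lemma~\ref{lem:non pure non gor} then gives $\trace(\omega_R)\subset R_{\geq2}$, contradicting $R_1\subset\trace(\omega_R)$. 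Thus $d_u-d_1\leq1$, which with $u\geq2$ forces $u=2$ and $d_2-d_1=1$, both $E_\KKK[\stab(G_{d_i})]$ being \gor.

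For the ``if'' direction, assume $u=2$, $d_2=d_1+1$, and $R^{(i)}\define E_\KKK[\stab(G_{d_i})]$ is \gor\ for $i=1,2$, so that $R=R^{(1)}\#R^{(2)}$ by the product description of Ehrhart rings. Non-Gorensteinness is immediate: the distinct clique numbers $d_1\neq d_2$ make $G$ non-pure, so $R$ is not \gor\ by Fact~\ref{fact:gor cri}. For near-Gorensteinness I would first note that since $R^{(i)}$ is \gor\ with $a(R^{(i)})=-d_i-1$ (Lemma~\ref{lem:a-inv}), its canonical module is principal, with generator $T^{\eta^{(i)}}$ where $\eta^{(i)}(z)=1$ on vertices and $\eta^{(i)}(-\infty)=d_i+1$: purity and the \gor\ cycle condition put $\eta^{(i)}\in\UUUUU^{(1)}(G_{d_i})$, and a dimension count in degree $d_i+1$ identifies it as the generator. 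Hence $\UUUUU^{(\pm1)}(G_{d_i})=\pm\eta^{(i)}+\UUUUU^{(0)}(G_{d_i})$. Given $\mu\in\UUUUU^{(0)}(G)$ with $N=\mu(-\infty)\geq1$, and writing $\mu_i$ for the restriction of $\mu$ to $V(G_{d_i})$, I look for $\eta\in\UUUUU^{(1)}(G)$ and $\zeta\in\UUUUU^{(-1)}(G)$ with $\eta+\zeta=\mu$, $\eta(-\infty)=p$ and $\zeta(-\infty)=N-p$. By the translation descriptions this reduces, for each $i$, to writing $(\mu_i,N)\in\UUUUU^{(0)}(G_{d_i})$ as a sum of two elements of $\UUUUU^{(0)}(G_{d_i})$ of degrees $p-d_i-1$ and $N-p+d_i+1$, i.e. to factoring the monomial $T^{\mu_i}$ in $R^{(i)}$ into homogeneous monomials of those two degrees.

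The crux is the choice of $p$. With $p=d_2+1$ the factorization over $G_{d_2}$ is the trivial one (degrees $0$ and $N$), while $G_{d_1}$ requires a factorization of degrees $1$ and $N-1$; taking $p=N+d_1+1$ reverses the two roles. Such a nontrivial factorization, splitting off a degree-one factor, is available as soon as the ring $E_\KKK[\stab(G_{d_i})]$ is standard graded, because standardness of a normal affine semigroup ring lets one peel off a degree-one lattice point. This is the main obstacle: as recalled just before Theorem~\ref{thm:gor pspec}, and using that we already have \gor\ on the punctured spectrum from Theorem~\ref{thm:gor pspec}, the ring $E_\KKK[\stab(G_d)]$ is standard graded whenever $d\neq2$, but this tool is unavailable at $d=2$. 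The saving point is that $d_1$ and $d_2=d_1+1$ are consecutive, so at most one of them equals $2$; I therefore route the nontrivial factorization onto the index different from $2$, choosing $p=N+d_1+1$ when $d_1=2$ and $p=d_2+1$ otherwise. Assembling the resulting component pieces into $\eta\in\UUUUU^{(1)}(G)$ and $\zeta\in\UUUUU^{(-1)}(G)$ with $\eta+\zeta=\mu$ shows $T^\mu\in\trace(\omega_R)$; as $\mu$ was arbitrary, $\mmmm_R\subset\trace(\omega_R)$ and $R$ is nearly \gor.
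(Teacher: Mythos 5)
Your proof is correct, and for most of its length it follows the same route as the paper's. The ``only if'' half is assembled from the identical ingredients: nearly \gor\ implies \gor\ on the punctured spectrum, hence each $E_\KKK[\stab(G_d)]$ is \gor\ by Theorem \ref{thm:gor pspec}, and Lemma \ref{lem:non pure non gor}, applied to maximal cliques of sizes $d_u$ and at most $d_1$, rules out any spread $d_u-d_1\geq 2$. In the ``if'' half, your construction is, after unwinding, exactly the paper's hand-made one in its cases $d_1=1$ and $d_1=2$: the paper splits a degree-one monomial factor off the component whose clique number is not $2$ (possible since that component's Ehrhart ring is standard graded by \cite[Theorem 1.1]{oh}) and translates by the all-ones vector with $-\infty\mapsto d_i+1$; these are precisely your two choices $p=d_2+1$ and $p=N+d_1+1$. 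The genuine divergence is the case $d_1\geq 3$, which the paper settles by citing \cite[Theorem 4.15]{hhs} (both component rings being standard graded \gor\ algebras with $a$-invariants $-d_1-1$ and $-d_1-2$), whereas you run the same translation argument there as well, so your proof is uniform across all three cases. What buys you this uniformity is the observation that \gor ness of $R^{(i)}$ forces its canonical ideal to be principal, generated by the monomial $T^{\eta^{(i)}}$ of degree $d_i+1$, whence $\UUUUU^{(\pm 1)}(G_{d_i})=\pm\eta^{(i)}+\UUUUU^{(0)}(G_{d_i})$; this converts the trace condition into the clean factorization problem of writing $T^{\mu_i}$ as a product of monomials of degrees $p-d_i-1$ and $N-p+d_i+1$, and it makes visible exactly why $d_2-d_1=1$ together with ``at most one $d_i$ equal to $2$'' is the right hypothesis. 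The trade-off: the paper's appeal to \cite[Theorem 4.15]{hhs} is shorter where it applies; your argument is self-contained, case-free, and needs nothing beyond Ohsugi--Hibi and the principality of the canonical ideals.
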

\begin{proof}
The ``only if'' part follows from Fact \ref{fact:gor cri}, Lemma \ref{lem:non pure non gor}
and Theorem \ref{thm:gor pspec}.

Next we prove the ``if'' part.
By Lemma \ref{lem:non pure non gor}, we see that $\trace(\omega_R)\subset R_{\geq 1}=\mmmm_R$.
We prove the reverse inclusion.
First consider the case where $d_1\geq3$.
This case follow from \cite[Theorem 4.15]{hhs}, since 
$a(E_\KKK[\stab(G_d)])=-d-1$ for any $d\in I$.

Next consider the case where $d_1=2$.
By assumption $d_2=3$ and $V$ is a disjoint union of $V_2$ and $V_3$.
Let $T^\mu$ be an arbitrary monomial in $\mmmm_R$.
Then by Fact \ref{fact:symb power}, we see that $\mu\in\UUUUU^{(0)}$ and $\mu(-\infty)\geq 1$.
Since $E_\KKK[\stab(G_3)]$ is standard graded, we see that there are 
$\mu_1$, $\mu_2\in\UUUUU^{(0)}(G_3)$ such that $\mu_1(-\infty)=1$ and $\mu_1(x)+\mu_2(x)=\mu(x)$
for any $x\in V_3^-$.
Define $\eta$, $\zeta\in\ZZZ^{V^-}$ by
$$
\eta(x)=
\left\{
\begin{array}{ll}
\mu(x)+1&\qquad\mbox{$x\in V_2$},\\
\mu_2(x)+1&\qquad\mbox{$x\in V_3$},\\
\mu(-\infty)+3&\qquad\mbox{$x=-\infty$},
\end{array}
\right.
$$
$$
\zeta(x)=
\left\{
\begin{array}{ll}
-1&\qquad\mbox{$x\in V_2$},\\
\mu_1(x)-1&\qquad\mbox{$x\in V_3$},\\
-3&\qquad\mbox{$x=-\infty$}.
\end{array}
\right.
$$
We show that $\eta\in\UUUUU^{(1)}$ and $\zeta\in\UUUUU^{(-1)}$.
It is clear that $\eta(x)\geq 1$ and $\zeta(x)\geq -1$ for any $x\in V$,
since $\mu\in\UUUUU^{(0)}$ and $\mu_1$, $\mu_2\in\UUUUU^{(0)}(G_3)$.
Next let $K$ be an arbitrary maximal clique in $G$.
If $K\subset V_2$, then $\#K=2$ and $\eta^+(K)=\mu^+(K)+2$.
Therefore, $\eta^+(K)+1=\mu^+(K)+3\leq\mu(-\infty)+3=\eta(-\infty)$.
Further, since $\zeta^+(K)=-2$, $\zeta^+(K)-1=-3=\zeta(-\infty)$.
If $K\subset V_3$, then $\#K=3$ and $\eta^+(K)=\mu_2^+(K)+3$.
Since $\mu_2(-\infty)=\mu(-\infty)-\mu_1(-\infty)=\mu(-\infty)-1$,
we see that
$\eta^+(K)+1=\mu_2^+(K)+4\leq \mu_2(-\infty)+4=\mu(-\infty)+3=\eta(-\infty)$.
Further, since $\mu_1(-\infty)=1$, we see that
$\zeta^+(K)=\mu_1^+(K)-3\leq\mu_1(-\infty)-3=-2$.
Therefore, $\zeta^+(K)-1\leq -3=\zeta(-\infty)$.
Next let $C$ be an odd cycle in $G$ without chord and length at least 5.
Then $C\subset V_2$ and the length of $C$ is 5 by Fact \ref{fact:gor cri}.
Therefore, since $\mu\in\UUUUU^{(0)}$, 
$\eta^+(C)=\mu^+(C)+5\leq 2\mu(-\infty)+5$.
Thus, we see that
$\eta^+(C)+1\leq 2\mu(-\infty)+6=2\eta(-\infty)$.
Further,
$\zeta^+(C)-1=-5-1=2\zeta(-\infty)$.

Therefore, we see that $\eta\in\UUUUU^{(1)}$ and $\zeta\in\UUUUU^{(-1)}$.
Since it is easily verified that $\eta+\zeta=\mu$, we see that 
$T^\mu\in\omega_R\omega_R^{-1}=\trace(\omega_R)$.
Since $T^\mu$ is an arbitrary monomial in $\mmmm_R$, we see that $\mmmm_R\subset\trace(\omega_R)$.

Finally we consider the case where $d_1=1$.
By assumption $d_2=2$ and $V$ is a disjoint union of $V_1$ and $V_2$.
Let $T^\mu$ be an arbitrary monomial in $\mmmm_R$.
Since $E_\KKK[\stab(G_{1})]$ is standard graded, we see that there are 
$\mu_1$, $\mu_2\in\UUUUU^{(0)}(G_1)$ such that $\mu_1(-\infty)=1$ and
$\mu_1(x)+\mu_2(x)=\mu(x)$ for any $x\in V_1^-$.
Note that $\mu_2(-\infty)=\mu(-\infty)-\mu_1(-\infty)=\mu(-\infty)-1$.
Set $\eta$, $\zeta\in\ZZZ^{V^-}$ by
$$
\eta(x)=
\left\{
\begin{array}{ll}
\mu_1(x)+1&\qquad\mbox{$x\in V_1$},\\
1&\qquad\mbox{$x\in V_2$},\\
3&\qquad\mbox{$x=-\infty$},
\end{array}
\right.
$$
$$
\zeta(x)=
\left\{
\begin{array}{ll}
\mu_2(x)-1&\qquad\mbox{$x\in V_1$},\\
\mu(x)-1&\qquad\mbox{$x\in V_2$},\\
\mu(-\infty)-3&\qquad\mbox{$x=-\infty$}.
\end{array}
\right.
$$
We show that $\eta\in \UUUUU^{(1)}$ and $\zeta\in \UUUUU^{(-1)}$.
It is clear that $\eta(x)\geq 1$ and $\zeta(x)\geq -1$ for any $x\in V$, since
$\mu(x)\geq 0$ for any $x\in V$ and $\mu_1(x)\geq0$ and $\mu_2(x)\geq0$ for any 
$x\in V_1$.
Next let $K$ be an arbitrary maximal clique in $G$.
If $K\subset V_1$, then $\#K=1$ and therefore $\eta^+(K)=\mu_1^+(K)+1$ and
$\zeta^+(K)=\mu_2^+(K)-1$.
Thus,
$\eta^+(K)+1=\mu_1^+(K)+2\leq\mu_1(-\infty)+2=3=\eta(-\infty)$
and
$\zeta^+(K)-1=\mu_2^+(K)-2\leq\mu_2(-\infty)-2=\mu(-\infty)-3=\zeta(-\infty)$.
If $K\subset V_2$, then $\#K=2$.
Thus,
$\eta^+(K)=2$ and $\zeta^+(K)=\mu^+(K)-2$.
Therefore,
$\eta^+(K)+1=3=\eta(-\infty)$
and 
$\zeta^+(K)-1=\mu^+(K)-3\leq\mu(-\infty)-3=\zeta(-\infty)$.
Finally, let $C$ be an odd cycle without chord and length at least 5.
Then $C\subset V_2$ and the length of $C$ is 5 by Fact \ref{fact:gor cri}.
Therefore,
$\eta^+(C)+1=5+1=6=2\eta(-\infty)$
and 
$\zeta^+(C)-1=\mu^+(C)-5-1=\mu^+(C)-6\leq 2\mu(-\infty)-6=2\zeta(-\infty)$.
Thus, we see that $\eta\in \UUUUU^{(1)}$ and $\zeta\in\UUUUU^{(-1)}$.
Since it is easily verified that $\eta+\zeta=\mu$, we see that 
$T^\mu\in\omega_R\omega_R^{-1}=\trace(\omega_R)$.
Since $T^\mu$ is an arbitrary monomial in $\mmmm_R$, we see that
$\mmmm_R\subset\trace(\omega_R)$.
\end{proof}
Since prefect and t-perfect graphs are h-perfect, we see the following.

\begin{cor}
\mylabel{cor:nearly gor perfect}
Let $G$ be a perfect or t-perfect graph.
Then $\eksg$ is not \gor\ and nearly \gor\ if and only if
$u=2$, $d_2-d_1=1$ and $E_\KKK[\stab(G_{d_i})]$ is \gor\ for $i=1$, $2$.
\end{cor}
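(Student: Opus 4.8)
The plan is to derive this corollary directly from Theorem \ref{thm:nearly gor}, in exactly the same way that Corollary \ref{cor:gor pspec perfect} was obtained from Theorem \ref{thm:gor pspec}. The decisive observation has already been recorded in Section \ref{sec:pre}: from the identity $\hstab(G)=\qstab(G)\cap\tstab(G)$ together with Chv\'atal's characterization \cite[Theorem 3.1]{chv}, both perfect graphs and t-perfect graphs are h-perfect. Consequently, any $G$ falling under the hypothesis of the present corollary also satisfies the hypothesis of Theorem \ref{thm:nearly gor}, and the asserted equivalence is simply the specialization of that theorem to these two subclasses.

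First I would point out that every quantity appearing in the conclusion is intrinsic to the graph $G$ and is independent of which of the three classes $G$ is regarded as belonging to. Indeed, the number $u$, the values $d_1$ and $d_2$, the induced subgraphs $G_{d_i}$, and the Ehrhart rings $E_\KKK[\stab(G_{d_i})]$ are all defined purely combinatorially in the notation introduced before Proposition \ref{prop:gor pspec}, while $\stab(\cdot)$ and its Ehrhart ring are defined for an arbitrary graph. Thus the combinatorial data on which the criterion depends are unchanged upon restricting from h-perfect graphs to perfect or t-perfect graphs, and the equivalence of Theorem \ref{thm:nearly gor} transfers verbatim.

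The whole argument therefore reduces to invoking the class inclusion and citing Theorem \ref{thm:nearly gor}, so there is essentially no obstacle: all of the substantive combinatorial and ring-theoretic work has already been carried out in that theorem and in Fact \ref{fact:gor cri}. The only point meriting a word of care is the verification that the invariants $u$, $d_1$, $d_2$ and the rings $E_\KKK[\stab(G_{d_i})]$ are genuinely insensitive to the ambient class, and this is immediate from their definitions. I would accordingly keep the proof to a single sentence that records the inclusion \emph{perfect, t-perfect $\subset$ h-perfect} and refers to Theorem \ref{thm:nearly gor}.
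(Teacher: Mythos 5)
Your proposal is correct and matches the paper's own proof, which consists precisely of the one-line observation that perfect and t-perfect graphs are h-perfect (recorded in Section \ref{sec:pre} via Chv\'atal's theorem and the inclusion $\stab(G)\subset\hstab(G)=\qstab(G)\cap\tstab(G)$) followed by an appeal to Theorem \ref{thm:nearly gor}. Your additional remark that the invariants $u$, $d_1$, $d_2$ and the rings $E_\KKK[\stab(G_{d_i})]$ are intrinsic to $G$ is a harmless elaboration of the same argument.
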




%

\end{document}